\documentclass[11pt,a4paper]{article}

\usepackage[T1]{fontenc}
\usepackage[utf8]{inputenc}
\usepackage{lmodern}
\usepackage[a4paper,margin=1in]{geometry}
\usepackage{amsmath,amssymb,amsthm,mathtools}
\usepackage{graphicx}
\usepackage{xcolor}
\usepackage{booktabs}
\usepackage[numbers,sort&compress]{natbib}
\usepackage{siunitx}
\usepackage{subcaption}
\usepackage{bm}
\usepackage{accents}
\usepackage{calc,fancyvrb}
\usepackage{algorithm,algpseudocode}
\usepackage{algorithmicx}
\usepackage{placeins}
\usepackage{microtype}
\usepackage{hyperref}
\usepackage{cleveref}

\makeatletter
\providecommand*{\theHALG@line}{}
\renewcommand*{\theHALG@line}{\thealgorithm.\arabic{ALG@line}}
\makeatother

\newcommand{\rev}[1]{#1}
\newcommand{\edit}[1]{#1}
\newenvironment{revisionblock}{\par\begingroup}{\par\endgroup}

\algrenewcommand\alglinenumber[1]{\footnotesize #1:}

\graphicspath{{figs/}}

\newcommand{\R}{\mathbb{R}}
\newcommand{\Z}{\mathbb{Z}}
\newcommand{\grad}{\nabla}
\newcommand{\lap}{\Delta}
\newcommand{\free}{\Omega_{\mathrm{free}}}
\newcommand{\obs}{\mathcal{O}}
\newcommand{\xx}{\mathbf{x}}

\newcommand{\norm}[1]{\left\lVert #1 \right\rVert}

\newcommand{\Gobs}{\Gamma_{\mathrm{obs}}}
\newcommand{\Gout}{\Gamma_{\mathrm{out}}}
\newcommand{\Ggoal}{\Gamma_{\mathrm{goal}}}

\newtheorem{proposition}{Proposition}[section]
\newtheorem{lemma}{Lemma}[section]
\newtheorem{definition}{Definition}[section]

\newtheorem{remark}{Remark}[section]

\numberwithin{equation}{section}

\renewcommand{\theHequation}{\arabic{section}.\arabic{equation}}

\begin{document}
\renewcommand{\floatpagefraction}{.85}
\renewcommand{\textfraction}{.05}

\title{An unfitted boundary algebraic equation method with static-dynamic reduction for evolving implicit geometries}

\author{\parbox{0.94\textwidth}{\centering
Yizhen Huang\textsuperscript{1} \quad
Qing Xia\textsuperscript{1,2}\thanks{Corresponding author:
\href{mailto:qxia@kean.edu}{qxia@kean.edu}}\\[0.6em]
\small \textsuperscript{1}College of Science, Mathematics and Technology,
Wenzhou-Kean University,\\
\small Wenzhou 325060, Zhejiang, China\\
\small \textsuperscript{2}International Frontier Interdisciplinary Research
Institute, Wenzhou-Kean University,\\
\small Wenzhou 325060, Zhejiang, China
}}
\date{}

\maketitle

\begin{abstract}
Repeated elliptic solves on domains with evolving boundaries arise in
moving-interface simulation, design, and reactive navigation. Even when a
fixed Cartesian grid avoids remeshing, rebuilding all boundary interactions for
every configuration can limit the efficiency of repeated solves. We develop a
static--dynamic boundary reduction for an unfitted lattice Green's function
method on prescribed moving planar domains. Like boundary integral and boundary
element methods, the formulation reduces the problem to boundary-supported
unknowns through a Green representation. Its construction, however, reverses
the usual order: the Cartesian operator is discretized before the Green
representation is formed, rather than representing the continuous problem
first and then discretizing the boundary. This discretize-then-represent
viewpoint avoids boundary meshes and singular quadrature. The method also
separates interactions associated with stationary geometry from those affected
by motion, reuses the invariant part throughout a simulation, and updates only
couplings involving the changing boundary. Boundary conditions are imposed at
true interface intersections, lattice-kernel data are reused, and the interior
field is reconstructed by a fast sine-transform solver. The principal
contribution is an implemented and validated update strategy for translating,
deforming, appearing, and topology-changing obstacles.
\end{abstract}

\noindent\textbf{Keywords:} difference potentials; lattice Green's functions;
unfitted methods; moving-boundary Laplace problems; harmonic navigation; fast
Poisson solvers
\medskip

\section{Introduction}
\label{sec:intro}

Solving an elliptic boundary value problem on a domain whose boundary is
complicated, known only implicitly, and---most consequentially---\emph{changing
in time} is a recurring computational task in numerical analysis \cite{osher1988fronts,lehrenfeld2019eulerian}. Free-boundary and
moving-interface problems, fluid--structure interaction, and reactive
navigation all require an elliptic solve to be repeated as the geometry evolves \cite{bazilevs2013computational,connolly1990path,kim1992real}.
Body-fitted methods can be effective for moderate boundary motion, for example
through mesh deformation or arbitrary Lagrangian--Eulerian techniques \cite{donea2004arbitrary,tezduyar2001finite}, but large
deformation, topology change, or newly appearing obstacles may require remeshing
and repeated operator assembly \cite{osher1988fronts,shamanskiy2021mesh}. In that regime the geometry update can become a
substantial part of the total cost.

\edit{This paper develops an unfitted method for a sequence of steady Laplace
problems on prescribed moving implicit domains. It does not solve a coupled
free-boundary evolution problem: the geometry at each frame is supplied to the
elliptic solver.} The driving
application---which motivates the boundary conditions, the accuracy metric, and
the dynamic experiments---is \emph{dynamic harmonic path planning}. Given a free
region $\free$ bounded by obstacles, one solves Laplace's equation with the goal
held at a low Dirichlet value and the obstacle and outer boundaries held at a
high Dirichlet value; the resulting potential $u$ is harmonic, and by the
maximum principle it has no interior local minima, so the descent flow of
$-\grad u$ reaches the goal from almost every starting
point~\cite{connolly1990path,connolly1993applications,kim1992real,
rimon1990exact}. This local-minimum-free guarantee is the
defining advantage of harmonic fields over the heuristic superposition of
attractive and repulsive potentials~\cite{khatib1986real}, which can stall at
spurious minima. When obstacles move, the field must be \emph{re-solved}, and
the efficiency of that re-solve is exactly the moving-geometry question above.

\edit{The LGF-BAE discretization separates reusable grid, kernel, and
reconstruction data from boundary-local data changed by motion. The directly
assembled density matrix admits a corresponding block partition;
\Cref{sec:method-dynamic} states precisely which block is invariant.}

We solve the harmonic boundary value problem on a fixed Cartesian grid with an
unfitted boundary algebraic equation (BAE) method built on lattice Green's
function (LGF) and difference-potentials ideas
\cite{xia2025geom,ryaben2012method,petropavlovsky2024unsteady}.
\edit{Like boundary integral and boundary element methods, LGF-BAE replaces a
volumetric system by boundary-supported unknowns connected through a Green
representation. The essential difference is the order of construction.
Classical boundary integral and boundary element formulations first represent
the continuous solution and then discretize the resulting boundary equation
\cite{helsingfast2011,bremer2010efficient}; they are
represent-then-discretize methods. LGF-BAE first discretizes the elliptic
operator on the Cartesian lattice and then uses the lattice Green's function
and a discrete Green identity to obtain the boundary representation; it is a
discretize-then-represent method. This ordering removes the need for a
body-fitted boundary mesh and singular boundary quadrature.} The resulting
solution is a discrete layer potential supported on boundary-adjacent exterior
grid points. Compactly supported local basis functions enforce Dirichlet data
at the true boundary--grid intersections.

This construction belongs to the broader family of fixed-background-grid
methods, including immersed interface method~\cite{leveque1994immersed},
immersed boundary method~\cite{peskin2002immersed}, kernel-free boundary integral 
method~\cite{ying2013kernel,ying2007kernel,zhou2024correction} and CutFEM
\cite{burman2015cutfem} discretizations; level-set methodology is reviewed in
\cite{gibou2018review}. CutFEM provides a flexible variational framework and
handles complex interfaces, but moving cut cells generally require updated
interface quadrature and stabilization data. In the present LGF-BAE setting,
motion changes boundary intersections, local interpolation weights, and the
coupled boundary blocks, while the Cartesian operator, tabulated Green's
function, and sine-transform reconstruction remain unchanged. This narrower elliptic
setting therefore exposes unusually direct reuse under geometry motion, at the
cost of dense boundary interactions unless local assembly or compression is
used. \edit{Closely related Cartesian-grid boundary-integral methods use fast
box solvers to evaluate boundary operators for moving-interface problems and
couple those solves to the interface evolution~\cite{zhou2026cartesian}; an
earlier level-set/boundary-integral method likewise accommodates moving
boundaries and topology change~\cite{garzon2005coupled}. Our narrower setting
uses an exact lattice kernel and table lookup for repeated prescribed-geometry
Dirichlet solves, rather than evolving the interface from the PDE solution.}

The harmonic-navigation model is classical and is used here as a demanding
application of a repeated Laplace solve. The underlying difference-potentials
discretization also builds on earlier work~\cite{xia2025geom,ryaben2012method,petropavlovsky2024unsteady,albright2015high}.
The contributions of the present paper are the following.
\begin{enumerate}
\item \textbf{Moving-geometry algebraic organization.} We separate the
boundary algebraic data into stationary and geometry-dependent components and
\edit{identify which blocks of the directly assembled density matrix change
when an implicit obstacle moves.}
\item \textbf{Static-block Schur complement.} \edit{For the density matrix,
we derive a Schur reduction whose static block can be factored once when the
static boundary neighborhood and ordering remain fixed. We also show why the
global inverse in the trace formulation invalidates an unqualified static-block
reuse claim. The implemented update caches the fixed collocation stencils and
$B_{\mathrm{ss}}$ factorization and assembles only the dynamic-coupled blocks on
later frames.}
\item \textbf{Gradient-sensitive verification.} \edit{We verify potential,
gradient-magnitude, and gradient-angle errors over the full interior, a fixed
physical bulk, and the first two interior grid layers, because path extraction
is controlled by $-\grad_h u/\norm{\grad_h u}$ rather than by the potential
alone.}
\item \textbf{Evolving-geometry validation.} \edit{The cached update is tested
under translation and a merge--split--merge topology change, including a case
with up to 23\% dynamic unknowns. Additional demonstrations cover moving,
deforming, appearing, and topology-changing obstacles without remeshing.}
\end{enumerate}

The remainder of the paper is organized as follows. \Cref{sec:problem} states
the harmonic boundary value problem and explains why gradient accuracy is the
right verification metric. \Cref{sec:method} presents the fixed-geometry
LGF-BAE discretization. \Cref{sec:method-dynamic} develops the moving-geometry
static/dynamic decomposition and presents the full-trace and cached block-update
algorithms separately.
\Cref{sec:analysis} discusses solvability, consistency, conditioning,
complexity and re-solve cost, and establishes the algebraic equivalence of the
block reduction and the full density solve (\Cref{sec:schur-equivalence}).
\Cref{sec:self-convergence} presents the convergence, same-grid equivalence,
and performance experiments. \Cref{sec:application} gives the dynamic
planning experiments, including the path-extraction procedure used there.
\Cref{sec:discussion} compares the method with fast marching and
sampling-based planning, discusses limitations, and outlines extensions.
\Cref{sec:conclusion} concludes.

\section{Model problem: the harmonic navigation field}
\label{sec:problem}

\subsection{Free space and the boundary value problem}
\label{sec:freespace}

Let $D \subset \R^2$ be a bounded outer domain, let
$\obs_1(t),\dots,\obs_m(t)\subset D$ be closed obstacle regions, and let
$G\subset D$ be a fixed closed goal region. The obstacles are described
implicitly by level-set functions $\phi_k(\xx,t)$~\cite{osher1988fronts}, with
$\obs_k(t)=\{\phi_k(\xx,t)\le0\}$. See Figure~\ref{fig:domain} for an illustration. The goal is also treated as an interior
boundary, $\Ggoal=\partial G$, rather than as a point constraint. At each time
$t$ the computational free space is
\begin{equation}\label{eq:free-space}
\free(t) = D\setminus\Bigl(\overline{G}\cup\bigcup_{k=1}^{m}\obs_k(t)\Bigr),
\qquad
\partial\free(t)=\Ggoal\cup\Gobs(t)\cup\Gout,
\end{equation}
where $\Gobs(t)=\bigcup_k\partial\obs_k(t)$ and $\Gout=\partial D$. The
obstacles need not be convex or simply connected and may be added, removed,
translated, or deformed by updating their level-set functions. A composite
level set, for instance
\begin{equation}
\Psi(\xx,t)=\max\bigl(\psi_{\rm out}(\xx),-\phi_G(\xx),
-\phi_1(\xx,t),\dots,-\phi_m(\xx,t)\bigr),
\end{equation}
encodes the instantaneous free space as $\{\Psi\le0\}$. This implicit
representation is what allows the geometry to move without remeshing.

\begin{figure}[htbp]
\centering
\includegraphics[width=0.35\linewidth]{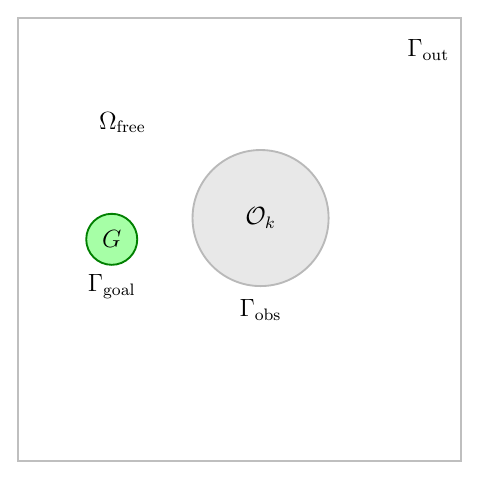}
\caption{\edit{Computational geometry: a fixed outer boundary encloses the free
space, moving obstacles, and the interior goal boundary. The harmonic field is
zero on the goal and one on the obstacle and outer boundaries.}}
\label{fig:domain}
\end{figure}

We seek a harmonic navigation field $u(\cdot,t):\free(t)\to\R$ satisfying
\begin{subequations}\label{eq:bvp}
\begin{align}
\lap u &= 0 && \text{in } \free(t), \\
u &= 0 && \text{on } \Ggoal, \\
u &= 1 && \text{on } \Gobs(t)\cup\Gout .
\end{align}
\end{subequations}
 \renewcommand{\theHequation}{\arabic{section}.\arabic{equation}}
 For a fixed time $t$, the strong maximum principle gives
$0<u<1$ in $\free(t)$, with no interior local extrema. In the dynamic setting the
same boundary value problem is solved repeatedly on the instantaneous domain
$\free(t)$. In the fixed-time discretization and analysis below, we suppress the
time variable and write $\free$ for the instantaneous computational domain.

\subsection{Why gradient accuracy is the relevant metric}
\label{sec:gradient-metric}

The planned trajectory is the integral curve of the normalized descent field,
\begin{equation}\label{eq:descent-ode}
\dot{\xx}(t) = -\,\frac{\grad u(\xx(t))}{\norm{\grad u(\xx(t))}},
\qquad \xx(0)=\xx_s,
\end{equation}
terminating on entry to $G$. The following classical fact motivates the analysis
target of \Cref{sec:analysis,sec:self-convergence}.

We assume $\free\subset\R^2$ is bounded, open, and connected, with
$\partial\free=\Ggoal\cup\Gobs\cup\Gout$ and $\Ggoal=\partial G$ of class $C^2$
(an interior-ball condition at $\Ggoal$ suffices). Let
$u\in C(\overline{\free})\cap C^\infty(\free)$ be the harmonic solution of
\eqref{eq:bvp}; near $\Ggoal$ we have $u\in C^1$. Since the Dirichlet data take
the two distinct values $0$ and $1$, $u$ is nonconstant.

The continuous harmonic navigation field has no interior local minima by
the maximum principle. Its isolated interior critical points are saddles, and
only a measure-zero set of initial conditions terminates at such saddles. Hence
the continuous model is locally-minimum-free for almost every starting point. In
the discrete setting, however, the computed trajectory is controlled by the
reconstructed field $\grad_h u$. This makes gradient accuracy, especially near
unfitted boundaries, a central numerical requirement. \edit{These are
continuous statements; they do not provide an arrival or collision-avoidance
guarantee for a finite-step NAG trajectory based on an interpolated
discrete gradient.}



\section{The unfitted lattice Green's function method}
\label{sec:method}

We first summarize the fixed-geometry LGF-BAE discretization of  the local-basis
difference-potentials method~\cite{xia2025geom}. The moving-geometry
decomposition,  the methodological core of this paper, is developed separately in
\Cref{sec:method-dynamic}.

\subsection{Grid and boundary neighborhood}
\label{sec:grid}
The free space is embedded in the Cartesian grid $h\Z^2$. Grid points are
classified as interior $M^+ = h\Z^2\cap\overline{\free}$ and exterior
$M^- = h\Z^2\setminus M^+$. With the five-point stencil $\mathcal N^5_{jk}$,
set $N^\pm = \{\mathcal N^5_{jk} : (x_j,y_k)\in M^\pm\}$. The discrete boundary
neighborhood is the stencil overlap $\gamma = N^+\cap N^-$, split as
$\gamma^+ = \gamma\cap M^+$ and $\gamma^- = \gamma\setminus\gamma^+$. Layer
densities live on $\gamma^-$; the induced potential is evaluated on $\gamma^+$
and throughout $M^+$. This classification (in addition to boundary intersection points) is the only geometry-dependent data;
the grid is fixed. \edit{The square and triangular markers in
\Cref{fig:gamma} show the two sides of this discrete boundary neighborhood.}

\begin{figure}[htbp]
\centering
\includegraphics[width=0.35\linewidth]{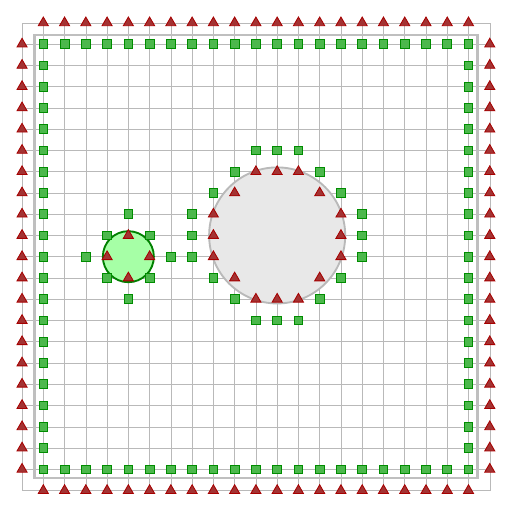}
\caption{Classification of Cartesian grid points into $\gamma^+$ (squares) and $\gamma^-$ (triangles) near an unfitted boundary.}
\label{fig:gamma}
\end{figure}

\subsection{Lattice Green's function}
\label{sec:lgf}
The lattice Green's function (LGF) is the fundamental solution of the discrete
Laplacian on the infinite grid, and it plays the role for the discrete operator
that the free-space kernel $-(2\pi)^{-1}\log r$ plays in classical potential
theory. With the (unscaled) five-point operator
\begin{equation}\label{eq:five-point}
[Au](m) = 4u(m) - u(m+e_1)-u(m-e_1)-u(m+e_2)-u(m-e_2),
\qquad m\in\Z^2,
\end{equation}
$e_1=(1,0)$, $e_2=(0,1)$, the LGF $G$ is defined by
\begin{equation}\label{eq:lgf-def}
A G = \delta_0, \qquad
\delta_0(m) = \begin{cases} 1, & m=(0,0),\\ 0, & m\neq(0,0).\end{cases}
\end{equation}
Diagonalizing $A$ by the Fourier transform on $\Z^2$ gives the formal
representation
\begin{equation}\label{eq:lgf-fourier-raw}
\widetilde G(m) = \frac{1}{(2\pi)^2}\int_{-\pi}^{\pi}\!\!\int_{-\pi}^{\pi}
\frac{\cos(\xi_1 m_1 + \xi_2 m_2)}{4-2\cos\xi_1-2\cos\xi_2}\,d\xi_1\,d\xi_2 ,
\end{equation}
whose symbol $4-2\cos\xi_1-2\cos\xi_2$ vanishes at $\xi=0$. The resulting zero
Fourier mode makes \eqref{eq:lgf-fourier-raw} divergent---a reflection of the fact
that $A$ annihilates constants and therefore has no finite-energy inverse on the
whole lattice. The method instead uses the \emph{normalized} kernel
\begin{equation}\label{eq:lgf-fourier}
G(m) = \frac{1}{(2\pi)^2}\int_{-\pi}^{\pi}\!\!\int_{-\pi}^{\pi}
\frac{\cos(\xi_1 m_1 + \xi_2 m_2)-1}{4-2\cos\xi_1-2\cos\xi_2}\,d\xi_1\,d\xi_2 ,
\end{equation}
in which subtracting $1$ in the numerator cancels the constant mode and renders
the integral convergent~\cite{gillman2014fast,liska2014parallel}. The additive constant is fixed by the normalization $G(0)=0$, and this same
normalized kernel is used consistently in all layer-potential and boundary
operators below.

The normalized kernel takes the exact values
\begin{equation}\label{eq:lgf-values}
G(0,0)=0, \qquad
G(\pm1,0)=G(0,\pm1)=-\tfrac14, \qquad
G(\pm1,\pm1)=-\tfrac{1}{\pi},
\end{equation}
and is evaluated in two regimes. For small $|m|$ the double integral
\cref{eq:lgf-fourier} is reduced to a stable one-dimensional integral and
evaluated once by quadrature. For large $|m|$ it is replaced by the asymptotic
expansion~\cite{liska2014parallel}
\begin{equation}\label{eq:lgf-asymp}
\begin{aligned}
G(m) ={}& -\frac{1}{2\pi}\Bigl(\log|m| + \gamma_E + \tfrac{\log 8}{2}\Bigr)
+ \frac{\cos(4\theta)}{24\pi|m|^2}
+ \frac{25\cos(8\theta)+18\cos(4\theta)}{480\pi|m|^4} \\
&+ \frac{490\cos(12\theta)+459\cos(8\theta)}{2016\pi|m|^6}
+ O(|m|^{-8}),
\qquad \theta=\operatorname{atan2}(m_2,m_1),
\end{aligned}
\end{equation}
whose leading term recovers the continuous Green's function
$-(2\pi)^{-1}\log|m|$ up to the lattice constant $\gamma_E+\tfrac12\log 8$
($\gamma_E$ the Euler--Mascheroni constant), and whose angular corrections encode
the $O(h^2)$ anisotropy of the five-point stencil. \edit{In the implementation,
quadrature is used for $|m|\le 30$ and the displayed asymptotic formula for
$|m|>30$. The switch is an implementation choice; no claim of
full-double-precision agreement at the crossover is made without a separate
kernel-error study.} A single tabulation is reused by every geometry.

Two consequences of \eqref{eq:lgf-values} and \eqref{eq:lgf-asymp} are central to the method.
First, unlike the continuous kernel, $G$ is \emph{finite at the origin}: the
discrete layer potentials built from $G$ never produce the singular or
near-singular integrands of classical boundary integral equations, so all kernel
interactions are exact finite table values and no special-purpose singular
quadrature is required. Second, $G$ depends only on the offset $m-n$ between two
grid points, so its values are tabulated \emph{once} for every offset vector that
can occur within the auxiliary computational box. As the geometry moves, new
boundary pairs reuse existing table entries rather than triggering fresh kernel
evaluations---the property that makes the per-frame re-solve of
\Cref{sec:method-dynamic} cheap.

\subsection{Layer potentials and the boundary algebraic relation}
The solution in $M^+$ is a discrete single-layer potential supported on
$\gamma^-$,
\begin{equation}\label{eq:single-layer}
u(m) = \sum_{n\in\gamma^-} G(m-n)\, q(n),\qquad m\in M^+,
\end{equation}
which satisfies $Au=0$ throughout $M^+$ exactly because the sources lie outside.
Restricting to the boundary sets gives $u_{\gamma^-} = S_- q$,
$u_{\gamma^+} = S_+ q$ with $S_-\in\R^{|\gamma^-|\times|\gamma^-|}$, and hence
the boundary algebraic relation $u_{\gamma^+} = S_+S_-^{-1}u_{\gamma^-}$, the
discrete analogue of a boundary integral equation. (A double-layer variant with
second-kind, mesh-independent conditioning is available~\cite{xia2025geom}; we use
the single-layer kernel, which is always well defined, and report its
conditioning in \Cref{sec:analysis}.)

\subsection{Dirichlet closure by local basis functions}
\label{sec:dirichlet-closure}
The physical condition $u=g$ holds on the true boundary $\Gamma$, not on grid
points. For each $\gamma^-$ point we select one nearby boundary intersection
$x_i\in\Gamma$ (the closest grid-line crossing, found by Newton iteration with
a bracketed fallback), giving a square set of collocation equations.
\edit{The implementation selects a $3\times3$ stencil for each intersection
and evaluates tensor-product quadratic ($Q_2$) Lagrange basis functions
$\phi_{jk}$. When a stencil requires points outside the original boundary
neighborhood, the trace is extended to those points by a second-order
cardinal-direction extrapolation.} In operator form,
\begin{equation}\label{eq:phi-system}
\sum_{x_{jk}\in\gamma} u_{jk}\,\phi_{jk}(x_i) = g(x_i)
\;\Longrightarrow\;
P_+ u_{\gamma^+} + P_- u_{\gamma^-} = g,
\end{equation}
\edit{where $P_+$ and $P_-$ include the local collocation and extrapolation
maps.} Substituting \cref{eq:single-layer} yields the square density system
\begin{equation}\label{eq:density-system}
Bq=(P_+ S_+ + P_- S_-)\,q \;=\; g,
\end{equation}
of size $|\gamma^-|\times|\gamma^-|$. Using exactly one intersection per
$\gamma^-$ point (rather than a least-squares closure) preserves a square
boundary algebraic system with the same number of equations and density
unknowns. \edit{Although the local interpolant is quadratic, the implemented
five-point scheme and extrapolation are used as a globally second-order method.} The local-basis construction
and its treatment of small cuts are inherited from~\cite{xia2025geom}; no
additional small-cut stabilization is introduced here.

\subsection{Interior reconstruction and gradient}
\label{sec:reconstruction}
Direct evaluation of \cref{eq:single-layer} at every interior point would cost
$O(|\gamma^-|\,|M^+|)$. The interior field is instead recovered by the
difference-potentials route, which trades this dense sum for a single fast
Poisson solve on a regular box. Embed $\free$ in a rectangular auxiliary box
$\Omega^0\supset\free$ and form the grid function that carries the computed
boundary-neighborhood trace and vanishes elsewhere,
\begin{equation}\label{eq:dp-trace}
v(m) = \begin{cases} u_\gamma(m), & m\in\gamma,\\ 0, & \text{otherwise},\end{cases}
\end{equation}
where $u_\gamma=(u_{\gamma^+},u_{\gamma^-})$ is supplied by the layer potential.
Solve the auxiliary problem
\begin{equation}\label{eq:dp-aux}
[Aw](m) = \begin{cases} 0, & m\in M^+,\\ [Av](m), & m\in M^-,\end{cases}
\qquad w=0 \ \text{on}\ \partial\Omega^0 ,
\end{equation}
\edit{with homogeneous Dirichlet data on the finite auxiliary box. The code
diagonalizes the five-point operator by a two-dimensional type-I discrete sine
transform; the transform is implemented through FFTs of length $2(N_g+1)$ in
each direction, where $N_g$ denotes the number of grid points in one coordinate direction of the auxiliary box. This is a finite-box Dirichlet inverse. A free-space
convolution obtained by padding the physical field is also viable.} Then $w$ is
discrete-harmonic on $M^+$ and matches the trace, $\operatorname{Tr}_\gamma
w=u_\gamma$ (from Difference Potentials Theory~\cite{ryaben2012method}), so $w$ reproduces the interior field at every point of $M^+$ in
$O(N_g^2\log N_g)$ work, \emph{independent of the geometry and of $|\gamma^-|$}. This
is the same sine-transform solver at every frame, regardless of how the obstacles move.

The gradient at interior grid points is formed by second-order centered
differences. Because the layer-potential representation supplies values at
\emph{all} points of $\gamma^\pm$, the centered stencil is well defined
everywhere in $M^+$ with no one-sided or least-squares cut-cell
reconstruction---a structural advantage for the gradient accuracy. Off-grid
gradients are bilinearly interpolated. \edit{Near an unfitted boundary, this
interpolation can mix nodal gradients from different sides of the mask; the
centered nodal stencil therefore does not by itself imply a uniformly
second-order off-grid gradient.}

The overall computational pipeline is summarized by the following operator-level schematic,
which emphasizes the structural separation between geometry processing, boundary
algebra, and sine-transform reconstruction:
\begin{equation}\label{eq:pipeline-schematic}
\begin{aligned}
\mbox{Level set }\Psi
&\longrightarrow (M^+,M^-,\gamma^+,\gamma^-) \\
&\longrightarrow \text{boundary intersections and } P^\pm \\
&\longrightarrow (P^+S_+ + P^-S_-)q = g \\
&\longrightarrow u_\gamma
\longrightarrow \text{sine-transform reconstruction}
\longrightarrow u,\grad_h u .
\end{aligned}
\end{equation}

\section{Moving-geometry decomposition and block reduction}
\label{sec:method-dynamic}
\begin{revisionblock}
Stationary geometry contributes reusable grid, kernel, and reconstruction data,
whereas moving obstacles change the boundary neighborhood swept by their level
sets. We use two complete re-solve workflows. Algorithm~\ref{alg:full-trace}
recomputes and solves the full trace system at every geometry.
Algorithm~\ref{alg:block-update} works with the density
system, separates its unknowns into static and dynamic sets, and reuses one
factorization of the invariant static block. Both algorithms produce the same
discrete Dirichlet solution; they differ in what is assembled and factored when
the geometry changes.

\subsection{Algorithm 1: full-trace recomputation}

For the geometry at time $t$, define the density matrix
\begin{equation}\label{eq:B-definition}
B(t):=P_+(t)S_+(t)+P_-(t)S_-(t),
\qquad B(t)q(t)=g(t).
\end{equation}
The trace unknown $y(t)=u_{\gamma^-}(t)=S_-(t)q(t)$ gives the equivalent
system
\begin{equation}\label{eq:resolve-bae}
C(t)y(t)=\bigl(P_+(t)S_+(t)S_-^{-1}(t)+P_-(t)\bigr)y(t)=g(t),
\qquad C(t)=B(t)S_-^{-1}(t).
\end{equation}
The factor $S_-^{-1}(t)$ acts as a right preconditioner, so $C(t)$ is the
better-conditioned system in the present tests. Its construction contains the
inverse of a geometry-dependent global layer matrix, however, and therefore
does not expose a static subblock that can be reused without additional
analysis. Algorithm~\ref{alg:full-trace} consequently rebuilds the complete
trace system for every frame.

\begin{algorithm}[htbp]
\caption{Full-trace recomputation for one geometry.}
\label{alg:full-trace}
\begin{algorithmic}[1]
\Require Updated obstacle level sets $\phi_k(\cdot,t)$.
\State Reclassify $M^\pm(t)$ and $\gamma^\pm(t)$ and compute the current boundary--grid intersections.
\State Assemble the interpolation operators $P_\pm(t)$ and layer matrices $S_\pm(t)$.
\State Form the full trace matrix $C(t)$ in \cref{eq:resolve-bae} and the boundary-data vector $g(t)$.
\State Solve $C(t)y(t)=g(t)$ and recover $q(t)=S_-^{-1}(t)y(t)$.
\State Evaluate the layer trace, reconstruct $u(t)$ by the sine-transform difference-potential solve, and form $\grad_h u(t)$.
\State \Return $u(t)$ and $\grad_h u(t)$.
\end{algorithmic}
\end{algorithm}

This workflow is geometry-general and provides the reference solution in the
moving-domain comparisons. Because it assembles $S_\pm(t)$ and $C(t)$ anew, it
does not attempt to reuse boundary algebra from the preceding frame.

\subsection{Algorithm 2: cached static--dynamic block update}

To reuse the fixed part of the boundary algebra, partition the current density
unknowns using a prescribed envelope of every possible moving obstacle:
\begin{equation}\label{eq:gamma-partition}
\gamma^-(t) = \gamma^-_{\mathrm{s}} \cup \gamma^-_{\mathrm{d}}(t), \qquad
\gamma^-_{\mathrm{s}} \cap \gamma^-_{\mathrm{d}}(t) = \varnothing.
\end{equation}
The static set contains unknowns whose collocation stencils belong only to
permanently fixed boundaries, such as the outer boundary and the goal. The
dynamic set contains the current $\gamma^-$ points inside the swept obstacle
envelope, so its membership and size may change with $t$. We assume that the
swept envelope does not intersect a fixed-boundary interpolation neighborhood
and that the static coordinates and their ordering remain unchanged across
frames.

With the ordering in \cref{eq:gamma-partition}, the density system becomes
\begin{equation}\label{eq:schur-partition}
\begin{bmatrix}
  B_{\mathrm{ss}} & B_{\mathrm{sd}}(t) \\
  B_{\mathrm{ds}}(t) & B_{\mathrm{dd}}(t)
\end{bmatrix}
\begin{bmatrix} q_{\mathrm{s}} \\ q_{\mathrm{d}} \end{bmatrix}
=
\begin{bmatrix} g_{\mathrm{s}} \\ g_{\mathrm{d}}(t) \end{bmatrix}.
\end{equation}
Here $B_{\mathrm{ss}}$ is invariant because both its collocation rows and its
density-source columns are attached to fixed boundaries. Eliminating
$q_{\mathrm s}$ gives
\begin{equation}\label{eq:schur-complement}
\bigl(B_{\mathrm{dd}}-B_{\mathrm{ds}}B_{\mathrm{ss}}^{-1}B_{\mathrm{sd}}\bigr)
q_{\mathrm{d}}
=g_{\mathrm{d}}-B_{\mathrm{ds}}B_{\mathrm{ss}}^{-1}g_{\mathrm{s}},
\end{equation}
followed by
$q_{\mathrm{s}}=B_{\mathrm{ss}}^{-1}(g_{\mathrm{s}}-B_{\mathrm{sd}}q_{\mathrm{d}})$.
The LU factorization of $B_{\mathrm{ss}}$ is computed only on the first frame.
If $n_{\mathrm s}=|\gamma^-_{\mathrm s}|$ and
$n_{\mathrm d}(t)=|\gamma^-_{\mathrm d}(t)|$, the subsequent dense block
construction and solve cost
\begin{equation}\label{eq:schur-cost}
O(n_{\mathrm s}^2n_{\mathrm d}
  +n_{\mathrm s}n_{\mathrm d}^2+n_{\mathrm d}^3).
\end{equation}
The dynamic fraction controls the reduction, but the speedup is not a simple
cubic factor $(n/n_{\mathrm d})^3$ because the dynamic variables remain densely
coupled to all static variables.

\begin{algorithm}[htbp]
\caption{Cached static--dynamic block update.}
\label{alg:block-update}
\begin{algorithmic}[1]
\Statex \textbf{One-time initialization}
\State Tabulate the lattice Green's function and initialize the sine-transform reconstruction solver.
\State Prescribe the swept obstacle envelope and the static/dynamic ordering.
\State On the first frame, cache the static coordinates and collocation stencils; assemble and factor $B_{\mathrm{ss}}$.
\Statex
\Statex \textbf{Update at time $t$}
\Require Updated obstacle level sets $\phi_k(\cdot,t)$.
\State Rebuild the current classification, intersections, and local collocation data, and verify the static coordinate set.
\State Assemble $B_{\mathrm{sd}}(t)$, $B_{\mathrm{ds}}(t)$, $B_{\mathrm{dd}}(t)$, and the current right-hand side; do not assemble $C(t)$.
\State Apply the cached LU factors in \cref{eq:schur-complement}, solve for $q_{\mathrm d}(t)$, and recover $q_{\mathrm s}(t)$.
\State Evaluate the layer trace, reconstruct $u(t)$ by the sine-transform difference-potential solve, and form $\grad_h u(t)$.
\State \Return $u(t)$ and $\grad_h u(t)$.
\end{algorithmic}
\end{algorithm}

\edit{The cache/update split used by the implementation is summarized in
\Cref{tab:static-dynamic}: only the right column and the blocks coupled to it
are refreshed after initialization.}

\begin{table}[htbp]
\centering
\caption{Static/dynamic decomposition used by Algorithm~\ref{alg:block-update}.
The static column is cached once; the dynamic column is refreshed when the
obstacle geometry changes.}
\label{tab:static-dynamic}
\begin{tabular}{ll}
  \toprule
  \textbf{Static (precomputed once)} & \textbf{Dynamic (recomputed per frame)} \\
  \midrule
  Cartesian grid $h\Z^2$ and auxiliary box & Boundary classification $M^\pm(t),\gamma^\pm(t)$ \\
  Lattice Green's function table $G(m)$ & Boundary--grid intersections \\
  Sine-transform reconstruction solver & Moving-boundary interpolation weights \\
  Envelope, static coordinates, and ordering & Dynamic coordinates inside the envelope \\
  Fixed collocation stencils and $g_{\mathrm s}$ & Moving-boundary data $g_{\mathrm d}(t)$ \\
  $B_{\mathrm{ss}}$ and its LU factorization & $B_{\mathrm{sd}}(t),B_{\mathrm{ds}}(t),B_{\mathrm{dd}}(t)$ \\
  \bottomrule
\end{tabular}
\end{table}

\subsection{Relationship between the two workflows}

Algorithms~\ref{alg:full-trace} and \ref{alg:block-update} differ only in the
boundary solve and its reusable data. Algorithm~\ref{alg:full-trace} recomputes the globally
preconditioned trace matrix and is used as the full-system reference.
Algorithm~\ref{alg:block-update} avoids constructing $C(t)$, reuses the factorization of
$B_{\mathrm{ss}}$, and assembles only blocks coupled to the swept obstacle
neighborhood. Under the nonsingularity assumptions of
\Cref{prop:schur-equivalence}, the two boundary solutions are algebraically
equivalent.

The present implementation of Algorithm~\ref{alg:block-update} still rebuilds the full-grid geometry
classification, intersections, and local collocation data each frame; the
numerical profiling separates this geometry cost from the reduced boundary
algebra.

\begin{remark}[Receding-horizon use]
Either workflow can serve a sensing-limited agent: solve the field on the
currently known free space, advance the agent, update the obstacle level sets
from new observations, and solve again. Algorithm~\ref{alg:block-update} is applicable when a fixed
envelope of possible geometry changes is available.
\end{remark}

The numerical results below use Algorithm~\ref{alg:full-trace} as the full-system reference and
Algorithm~\ref{alg:block-update} for cached moving-geometry updates. They test algebraic equivalence,
one-time static factorization, and the remaining per-frame geometry cost.
\end{revisionblock}

\section{Analysis: solvability, convergence, conditioning, complexity}
\label{sec:analysis}

This section provides the theoretical underpinning for the method
developed in \Cref{sec:method,sec:method-dynamic}. 

\subsection{Solvability}

The density system \cref{eq:density-system} couples three operators on the
boundary neighborhood: the single-layer matrices $S_\pm$, defined by
$u_{\gamma^\pm}=S_\pm q$ for the single-layer potential $\mathcal S q$ of
\cref{eq:single-layer}, and the sparse local collocation operators $P_\pm$ of
\cref{eq:phi-system}. We make the underlying discrete Dirichlet problem explicit
and then reduce solvability of \cref{eq:density-system} to two transparent
conditions.

\begin{definition}[Discrete interior Dirichlet problem]
Given boundary data $\varphi$ on $\gamma^-$, the discrete-harmonic extension is
the grid function $w$ on $N^+:=M^+\cup\gamma^-$ with $(Aw)(m)=0$ for
$m\in M^+$ and $w=\varphi$ on $\gamma^-$, where $A$ is the five-point Laplacian.
\end{definition}

\begin{lemma}[Discrete maximum principle]\label{lem:dmp}
If $M^+$ is finite, nonempty, and connected in the grid graph, and $(Aw)(m)=0$
for all $m\in M^+$, then $\min_{\gamma^-}w\le w(m)\le\max_{\gamma^-}w$ for every
$m\in M^+$. In particular the discrete interior Dirichlet problem has a unique
solution, and $w\equiv0$ on $N^+$ whenever $\varphi\equiv0$.
\end{lemma}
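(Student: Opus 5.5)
The argument is the standard mean-value and connectivity proof. At each $m\in M^+$ the equation $(Aw)(m)=0$ says
\[
w(m)=\tfrac14\sum_{|n-m|=1}w(n).
\]
So $w(m)$ is the average of its four lattice neighbours. Each neighbour lies in $N^+=M^+\cup\gamma^-$, because a five-point stencil centred at an interior point belongs to $N^+$. A stencil point that is not in $M^+$ lies in $N^+\cap N^-$ and outside $M^+$, hence in $\gamma^-$. This stencil closure is the first thing I would record.

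\textbf{Upper bound.} Let $\mu=\max_{N^+}w$, which exists because $N^+$ is finite when $M^+$ is. Suppose, for contradiction, that $\mu>\max_{\gamma^-}w$. Then $\mu$ is attained at some $m_0\in M^+$. Since $w(m_0)$ is an average of values at most $\mu$, all four neighbours of $m_0$ also equal $\mu$. Any neighbour in $\gamma^-$ would then contradict $\mu>\max_{\gamma^-}w$, so every neighbour of $m_0$ lies in $M^+$ and the set $\{m\in M^+: w(m)=\mu\}$ is closed under taking grid neighbours.

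By connectivity of $M^+$ this set is all of $M^+$. It therefore suffices to exhibit one point of $M^+$ having a neighbour in $\gamma^-$ (equivalently, a neighbour outside $M^+$); that neighbour would take the value $\mu$, a contradiction. Such a point exists because $M^+$ is finite and nonempty. Taking the point of $M^+$ with the largest first coordinate, its right neighbour $m+e_1$ is not in $M^+$, so it lies in $N^+\setminus M^+=\gamma^-$.

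\textbf{Lower bound.} Apply the upper-bound argument to $-w$.

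\textbf{Uniqueness.} If $w_1,w_2$ both solve the Dirichlet problem, then $w_1-w_2$ is discrete harmonic on $M^+$ and vanishes on $\gamma^-$. The two bounds force it to be $0$ on $M^+$, hence on $N^+$. This is also the claimed statement for $\varphi\equiv0$.

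\textbf{Existence.} The problem is a square linear system: $|M^+|$ equations in the $|M^+|$ unknowns $w|_{M^+}$, with the $\gamma^-$ values moved to the right-hand side. Injectivity, which is the uniqueness just shown, therefore gives solvability by finite-dimensional linear algebra.

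The main obstacle is the bookkeeping that justifies the steps above. One point is stencil closure, i.e. that every neighbour of an $M^+$ point lies in $M^+\cup\gamma^-$, which follows from the definitions $N^\pm$ and $\gamma=N^+\cap N^-$. The other is guaranteeing that a maximal point adjacent to $\gamma^-$ exists; the extremal-coordinate trick handles this cleanly. Everything else is routine.
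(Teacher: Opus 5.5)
Your proof is correct and follows the same route as the paper's: the five-point mean-value identity on $M^+$, propagation of a maximal value through the connected set $M^+$ until it reaches a point of $\gamma^-$, uniqueness from the difference of two solutions, and existence from the square linear system. Your version tightens two steps the paper only sketches. You take the maximum over $N^+=M^+\cup\gamma^-$ rather than over $M^+$, where the paper's wording is vacuous, and you use finiteness through the extremal-coordinate point to guarantee that the propagation actually reaches $\gamma^-$.
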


\begin{proof}
For $m\in M^+$, $(Aw)(m)=0$ reads $w(m)=\tfrac14\sum_{|e|=1}w(m+e)$: each interior
value is the average of its four axis-neighbors, which lie in $M^+\cup\gamma^-$.
Hence $w$ admits no strict interior extremum; if the maximum over
$M^+$ were attained at some $m_0\in M^+$, equality in the average
forces the four neighbors to share that value, and \rev{connectedness in the
grid graph} propagates it through $M^+$ and to the adjacent points of $\gamma^-$, so the maximum is also
attained on $\gamma^-$; likewise for the minimum. Uniqueness follows by applying
the bound to the difference of two solutions, and the square system then also has
existence.
\end{proof}

\rev{Thus the five-point interior operator supplies a discrete analogue of the
continuous maximum principle: nodal values cannot create a strict interior
extremum. This statement does not, by itself, control extrema of the off-grid
interpolant or guarantee that a numerically integrated trajectory avoids all
discrete critical points.}

By \cref{lem:dmp} the extension is well defined; let
$E:\R^{\gamma^-}\to\R^{\gamma^+}$ return its values on $\gamma^+$, i.e.\
$w_{\gamma^+}=E\varphi$, and define the \emph{Dirichlet collocation operator}
\[
C:=P_+E+P_-\in\R^{|\gamma^-|\times|\gamma^-|}.
\]

\begin{proposition}[Solvability of the density system]\label{prop:solvability}
Assume \rev{(H1) each $\gamma^-$ point is assigned exactly one boundary
intersection lying on a Cartesian grid edge having that point as an endpoint,
as constructed by the edge-based Newton search of
\Cref{sec:dirichlet-closure},} so the collocation system is square, and that $M^+$ is
finite, nonempty, and connected. Then
\[
P_+S_++P_-S_- \;=\; C\,S_-,
\]
so the density matrix is nonsingular if and only if both $C$ and $S_-$ are.
In that case \cref{eq:density-system} has a unique solution $q$, and
$\mathcal S q$ is the unique grid function that is discrete-harmonic on $M^+$ and
whose local $Q_2$ interpolant, including the stated extrapolation, matches the
data $g$ at the collocation points.
\end{proposition}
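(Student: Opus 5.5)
The key identity is $S_+ = E\,S_-$. To prove it, I would fix a density $q$ and set $w=\mathcal S q$, the single-layer potential \cref{eq:single-layer}, evaluated on $N^+=M^+\cup\gamma^-$. Every source point $n\in\gamma^-$ lies in $M^-$, so $n\notin M^+$. Since $AG=\delta_0$, for $m\in M^+$ we have $(Aw)(m)=\sum_{n\in\gamma^-}\delta_0(m-n)q(n)=0$. The five-point stencil at $m\in M^+$ only involves points of $N^+$. Such a point lies either in $M^+$ or in $N^+\cap M^-\subset\gamma^-$, because a stencil point in $M^-$ also belongs to $N^-$. Hence $w|_{N^+}$ solves the discrete interior Dirichlet problem with data $\varphi=w_{\gamma^-}=S_-q$. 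By \cref{lem:dmp} this solution is unique, so $w_{\gamma^+}=E\,S_-q$. Since $w_{\gamma^+}=S_+q$ and $q$ was arbitrary, $S_+=ES_-$.

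Substituting this into the density matrix gives $P_+S_++P_-S_-=(P_+E+P_-)S_-=CS_-$. Hypothesis (H1) makes $P_\pm$, and therefore $C$, $S_-$ and $CS_-$, all square of size $|\gamma^-|$. For square matrices $\det(CS_-)=\det C\,\det S_-$, so $CS_-$ is nonsingular if and only if both $C$ and $S_-$ are. When this holds, \cref{eq:density-system} has the unique solution $q=S_-^{-1}C^{-1}g$.

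For the characterization of $\mathcal S q$, existence is immediate. By the first step $w=\mathcal S q$ is discrete-harmonic on $M^+$, and its $\gamma^\pm$ traces satisfy $P_+w_{\gamma^+}+P_-w_{\gamma^-}=Bq=g$. This is exactly the collocation condition \cref{eq:phi-system}, with the extrapolation built into $P_\pm$.

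For uniqueness, let $\tilde w$ be any grid function on $N^+$ that is discrete-harmonic on $M^+$ and satisfies the collocation condition. I read ``the local $Q_2$ interpolant matches $g$'' as meaning $P_+\tilde w_{\gamma^+}+P_-\tilde w_{\gamma^-}=g$. By \cref{lem:dmp}, $\tilde w_{\gamma^+}=E\tilde w_{\gamma^-}$, so $C\tilde w_{\gamma^-}=g=Cw_{\gamma^-}$. Invertibility of $C$ then gives $\tilde w_{\gamma^-}=w_{\gamma^-}$. Uniqueness in \cref{lem:dmp} extends this equality to all of $N^+$.

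The main obstacle I expect is the bookkeeping in the first step. One must check that $\gamma^+$ and every stencil used by $P_\pm$ lie inside the set where the Dirichlet extension is defined; in particular $\gamma^+\subset M^+$ by definition. One must also check that $P_\pm$ depend only on traces on $\gamma^\pm$, even though they include extrapolation to points outside $\gamma$. I would handle the latter by noting that the extrapolation is a fixed linear map of $\gamma$-values folded into $P_\pm$, as the paper states after \cref{eq:phi-system}. The uniqueness claim is therefore a statement about grid functions identified through their $\gamma$-traces.
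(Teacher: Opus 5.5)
Your proposal is correct and takes the same route as the paper: you derive $S_+=ES_-$ from the uniqueness part of \cref{lem:dmp}, factor the density matrix as $CS_-$, and conclude with $\det(CS_-)=\det C\,\det S_-$. You also fill in two steps the paper leaves implicit: the check that five-point stencils centered in $M^+$ stay inside $M^+\cup\gamma^-$, and the uniqueness argument for the characterization, which needs only the invertibility of $C$. One small slip: (H1) fixes the row count of $P_\pm$ at $|\gamma^-|$, but it does not make $P_+$ square, since $P_+$ is $|\gamma^-|\times|\gamma^+|$; this does not affect the argument.
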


\begin{proof}
For any density $q$, $\mathcal S q$ satisfies $A(\mathcal S q)=q$ on $\gamma^-$
and $A(\mathcal S q)=0$ on $M^+$, since the sources lie in $\gamma^-\subset M^-$.
Thus $\mathcal S q$ restricted to $N^+$ is discrete-harmonic on $M^+$
with $\gamma^-$-trace $S_-q$; by the uniqueness in \cref{lem:dmp} it coincides
with the extension of $S_-q$, so its $\gamma^+$-values satisfy $S_+q=E\,S_-q$.
As this holds for every $q$, $S_+=ES_-$, whence
$P_+S_++P_-S_-=(P_+E+P_-)S_-=CS_-$. Both factors are square of order
$|\gamma^-|$, so $\det(CS_-)=\det C\,\det S_-$, which gives the equivalence. When
both are nonsingular $q$ is unique, and $C\,(S_-q)=g$ exhibits $\mathcal S q$ as
the discrete-harmonic field whose interpolated trace equals $g$.
\end{proof}

\begin{remark}[Well-posedness versus representation]\label{rem:wp-rep}
The two factors play different roles. Nonsingularity of $C$ is the genuine
well-posedness of the discretized boundary value problem: $C\,w_{\gamma^-}=g$
determines the discrete-harmonic field directly from the imposed data, with no
reference to a layer density. Nonsingularity of $S_-$ is a property of the
single-layer \emph{representation}: a null density $q_0$ with $S_-q_0=0$
generates, by \cref{lem:dmp}, a potential vanishing on $N^+$, hence a
gauge mode contributing nothing to the interior field rather than a defect of the
field itself.
\end{remark}

\begin{remark}[Unisolvence of the collocation closure]\label{rem:uniso}
\Cref{prop:solvability} is conditional on nonsingularity of $C$ and
$S_-$. The local-basis difference-potentials analysis in
\cite{ryaben2012method,xia2025geom} gives the relevant consistency and
unisolvence framework. In the present paper these conditions are checked
numerically through the observed conditioning of the assembled systems; a new
uniform lower bound for arbitrary moving cut configurations is not claimed.
\end{remark}

\subsection{Consistency and observed convergence}
\label{sec:consistency}
The interior operator is the standard second-order five-point Laplacian, and the
Dirichlet closure \cref{eq:phi-system} has second-order interpolation error for
smooth data. The corresponding second-order convergence of the potential is
established for the underlying local-basis difference-potentials method in
\cite{xia2025geom}. For the reconstructed gradient, centered differencing of a
generic second-order nodal approximation does not by itself imply a second-order
maximum-norm estimate. We therefore treat gradient accuracy as a numerical
verification target rather than a proved result. The manufactured-solution tests
in \Cref{sec:selfconv-mfg} show near-second-order bulk convergence for the
configurations considered here.

\subsection{Conditioning}
\edit{The single-layer density formulation becomes increasingly ill-conditioned
as the grid is refined, so its conditioning must be checked.
\Cref{tab:conditioning-refinement} reports spectral condition numbers at the
midpoint geometry of the translating-circle test. The trace matrix remains well
conditioned, whereas $S_-$, $B$, and $B_{\mathrm{ss}}$ grow with refinement.
The reduced density-block Schur complement is substantially better conditioned
than the full density matrix, although its condition number also grows.}

\begin{table}[htbp]
\centering
\small
\caption{Spectral conditioning at the midpoint of the translating-circle
trajectory. Here $S_B=B_{\mathrm{dd}}-B_{\mathrm{ds}}
B_{\mathrm{ss}}^{-1}B_{\mathrm{sd}}$.}
\label{tab:conditioning-refinement}
\begin{tabular}{lrrrrrrr}
  \toprule
  $\ell$ & $N_g^2$ & $n$ & $\kappa_2(S_-)$ & $\kappa_2(B)$ & $\kappa_2(C)$ & $\kappa_2(B_{\mathrm{ss}})$ & $\kappa_2(S_B)$ \\
  \midrule
  5 & $31^2$  & 117  & $4.61\times10^2$ & $7.16\times10^2$ & 2.93 & $4.78\times10^2$ & $2.22\times10^1$ \\
  6 & $63^2$  & 245  & $1.12\times10^3$ & $1.78\times10^3$ & 2.35 & $1.64\times10^3$ & $5.13\times10^1$ \\
  7 & $127^2$ & 500  & $2.60\times10^3$ & $5.02\times10^3$ & 2.99 & $4.05\times10^3$ & $1.51\times10^2$ \\
  8 & $255^2$ & 1012 & $5.89\times10^3$ & $2.14\times10^4$ & 5.46 & $1.95\times10^4$ & $4.14\times10^2$ \\
  \bottomrule
\end{tabular}
\end{table}

\edit{The relative Frobenius residual in the independently assembled identity
$B=CS_-$ is at most $3.82\times10^{-15}$ across these levels. The largest
observed density-block residual in the moving tests is
$1.07\times10^{-14}$. These results support the present direct solves through
$\ell=8$, but they also motivate a well-conditioned or iteratively
preconditioned block formulation at substantially finer resolutions.}

\subsection{Complexity}
Let $n=|\gamma^-|=n_{\mathrm s}+n_{\mathrm d}$ and let $N_g$ denote the number
of grid points in one coordinate direction of the auxiliary box. Full dense
assembly of the density matrix costs $O(n^2)$ kernel-table accesses, a dense
matrix--vector product or triangular solve costs $O(n^2)$, and a direct
factorization costs $O(n^3)$. The sine-transform reconstruction costs
$O(N_g^2\log N_g)$ in two dimensions. Because
$n$ scales with boundary length rather than domain area, the boundary algebra is
attractive for perimeter-limited geometries, although dense assembly is already
the measured bottleneck at the present two-dimensional sizes.

After the one-time $O(n_{\mathrm s}^3)$ factorization of $B_{\mathrm{ss}}$, the
Schur construction and solve have the per-frame cost stated in
\cref{eq:schur-cost}. \edit{The implemented block assembler refreshes}
$O(n_{\mathrm s}n_{\mathrm d}+n_{\mathrm d}^2)$ entries rather than all
$O(n^2)$ entries. At larger boundary sizes, hierarchical or fast-multipole
compression~\cite{hackbusch1999sparse,greengard1987fast,liska2014parallel} is a
natural route for accelerating both matrix construction and matrix--vector
products. \edit{Kernel compression and a swept-band geometry update are not
implemented in the present work.}

\subsection{Cost model for geometry motion}
\label{sec:analysis-resolve}
If an obstacle boundary of perimeter $P$ moves by a distance $\delta$, the grid
points whose classification or interpolation stencil can change lie in a swept
band of area $O(P\delta)$. On a Cartesian grid, the corresponding geometry
update involves $O(P\delta/h^2)$ grid points, capped by the size of the boundary
neighborhood. This locality applies to classification, intersection finding, and
interpolation data. The dense boundary algebra is different: each dynamic
unknown interacts with all static unknowns, so a block refresh contains
$2n_{\mathrm s}n_{\mathrm d}+n_{\mathrm d}^2$ entries. Under rigid translation,
$n_{\mathrm d}$ and the total boundary size are expected to vary only mildly,
but position robustness must be assessed numerically rather than inferred from
this count alone.

\begin{remark}
In three dimensions the boundary unknown count grows with surface area and can
reach $O(10^4)$ even for moderate grids. Dense factorization and matrix--vector
products then become prohibitive. The static-block reduction remains useful when
the dynamic surface is a small fraction of the total boundary, but the cross
terms in \cref{eq:schur-cost} remain. \edit{Practical three-dimensional
performance will therefore require kernel compression and a local geometry
update in addition to the implemented block refresh; these extensions are left
for future work.}
\end{remark}

\subsection{Algebraic equivalence of the block reduction}
\label{sec:schur-equivalence}
\begin{revisionblock}
\begin{proposition}[Schur equivalence]\label{prop:schur-equivalence}
Suppose $B_{\mathrm{ss}}$ is nonsingular and define
$S_B=B_{\mathrm{dd}}-B_{\mathrm{ds}}B_{\mathrm{ss}}^{-1}B_{\mathrm{sd}}$.
Then the full block matrix in \cref{eq:schur-partition} is nonsingular if and
only if $S_B$ is nonsingular. In that case, solving
\cref{eq:schur-complement} and recovering $q_{\mathrm s}$ gives exactly the
solution of the full system in exact arithmetic.
\end{proposition}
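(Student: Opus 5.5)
The plan is to use the standard block LDU factorization of the partitioned matrix in \cref{eq:schur-partition}. Assuming $B_{\mathrm{ss}}$ is nonsingular, I would first verify by direct block multiplication the identity
\[
\begin{bmatrix} B_{\mathrm{ss}} & B_{\mathrm{sd}} \\ B_{\mathrm{ds}} & B_{\mathrm{dd}} \end{bmatrix}
=
\begin{bmatrix} I & 0 \\ B_{\mathrm{ds}}B_{\mathrm{ss}}^{-1} & I \end{bmatrix}
\begin{bmatrix} B_{\mathrm{ss}} & 0 \\ 0 & S_B \end{bmatrix}
\begin{bmatrix} I & B_{\mathrm{ss}}^{-1}B_{\mathrm{sd}} \\ 0 & I \end{bmatrix}.
\]
The only entry that needs checking is the $(2,2)$ block. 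It equals $B_{\mathrm{ds}}B_{\mathrm{ss}}^{-1}B_{\mathrm{ss}}B_{\mathrm{ss}}^{-1}B_{\mathrm{sd}}+S_B=B_{\mathrm{dd}}$, which is just the definition of $S_B$.

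Taking determinants, the outer unit-triangular factors have determinant one. This gives $\det B=\det B_{\mathrm{ss}}\,\det S_B$. Since $\det B_{\mathrm{ss}}\neq0$, $B$ is nonsingular if and only if $S_B$ is, which proves the first claim.

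For the second claim, I would argue in two directions.

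\emph{First direction.} Let $(q_{\mathrm s},q_{\mathrm d})$ be the unique solution of the full system. The first block row gives $q_{\mathrm s}=B_{\mathrm{ss}}^{-1}(g_{\mathrm s}-B_{\mathrm{sd}}q_{\mathrm d})$. Substituting this into the second block row yields exactly \cref{eq:schur-complement}. So $q_{\mathrm d}$ solves the reduced system, and by nonsingularity of $S_B$ it is the unique solution there.

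\emph{Second direction.} Conversely, let $q_{\mathrm d}$ solve \cref{eq:schur-complement} and define $q_{\mathrm s}$ by the recovery formula. Then the first block row holds by construction. Rearranging the Schur equation shows that the second block row holds as well. Hence the pair solves the full system, and by uniqueness it coincides with the full solution.

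Everything here is exact linear algebra, so the equality holds in exact arithmetic, with no dependence on the particular ordering beyond the fixed partition \cref{eq:gamma-partition}.

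I do not expect any real obstacle. The only care needed is bookkeeping the factorization correctly, and noting that the "if and only if" relies on the standing hypothesis that $B_{\mathrm{ss}}$ is nonsingular. Optionally, combining this with \Cref{prop:solvability} shows that $S_B$ is nonsingular precisely when both $C$ and $S_-$ are (given that $B_{\mathrm{ss}}$ is invertible). That links the cached Algorithm~\ref{alg:block-update} to the full-trace reference of Algorithm~\ref{alg:full-trace}: $q=S_-^{-1}y$ there solves the same system $Bq=g$.
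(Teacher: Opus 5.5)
Your proposal is correct and takes essentially the same approach as the paper. The paper factors the matrix by block elimination into a unit lower-triangular factor times a block upper-triangular factor with diagonal blocks $B_{\mathrm{ss}}$ and $S_B$, and obtains \cref{eq:schur-complement} and the recovery of $q_{\mathrm s}$ by forward substitution; your three-factor LDU form with determinants, and your explicit two-direction check of the solution, are only cosmetic variants of that argument.
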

\begin{proof}
Suppressing the time argument, block elimination gives the factorization
\[
\begin{bmatrix}B_{\mathrm{ss}}&B_{\mathrm{sd}}\\
B_{\mathrm{ds}}&B_{\mathrm{dd}}\end{bmatrix}
=
\begin{bmatrix}I&0\\B_{\mathrm{ds}}B_{\mathrm{ss}}^{-1}&I\end{bmatrix}
\begin{bmatrix}B_{\mathrm{ss}}&B_{\mathrm{sd}}\\0&S_B\end{bmatrix}.
\]
The first factor is always nonsingular, and the second is nonsingular exactly
when both diagonal blocks are nonsingular. Forward substitution in this
factorization yields \cref{eq:schur-complement} and the stated recovery formula.
\end{proof}
\end{revisionblock}

\edit{The same-grid experiments in \Cref{sec:schur-comparison} test this exact
algebraic statement on explicitly specified translating and topology-changing
geometries.}

\section{Numerical verification and performance}
\label{sec:self-convergence}

\edit{We organize the numerical study from discretization accuracy to
moving-geometry equivalence and performance. We first assess the convergence
properties of the LGF-BAE discretization on two complementary
geometries: a circular obstacle without re-entrant corners in the free domain, and
a cross-shaped obstacle whose convex corners create re-entrant corners in the surrounding free space.  Both cases use the domain $[-1,1]^2$ with Dirichlet
conditions $u=0$ at a small goal region and $u=1$ on all other boundaries.
Self-convergence is measured against an $\ell=8$ reference solution on nested
grids. Errors are reported at common grid points more than $3h$ from any
boundary. Because this exclusion distance shrinks with refinement, the resulting
gradient orders should be interpreted as mesh-dependent bulk diagnostics rather
than uniform interior estimates. We then specify the translating and
topology-changing configurations used to test same-grid agreement, re-solve
cost, and component-level timing.}

\edit{\paragraph{Algorithm assignment.}
The smooth-geometry, corner-geometry, manufactured-solution, and subgrid-position
tests use Algorithm~\ref{alg:full-trace}. The moving-circle convergence,
same-grid agreement, and profiling tests compare
Algorithm~\ref{alg:full-trace} with Algorithm~\ref{alg:block-update}. All
dynamic trajectory figures in
\Cref{sec:app-dynamic,sec:topology-change,sec:appearing-obstacle} are generated
with Algorithm~\ref{alg:block-update}.}

\subsection{Smooth geometry: circular obstacle}

The first test places a circular obstacle of radius $0.3$ centered
at the origin, with a goal disk of radius $0.08$ at $(-0.6,0)$.  The obstacle and goal boundaries are smooth and the free domain has no
re-entrant corners. The outer boundary remains the square $[-1,1]^2$. We expect
second-order potential convergence from the discretization; gradient convergence
is assessed numerically. \edit{The two representative fields are shown side by
side in \Cref{fig:field-comparison}, and \Cref{tab:selfconv-disk} reports the
corresponding bulk self-convergence errors.}

\begin{figure}[htbp]
\centering
\subcaptionbox{Smooth disk}
  {\includegraphics[width=0.35\linewidth]{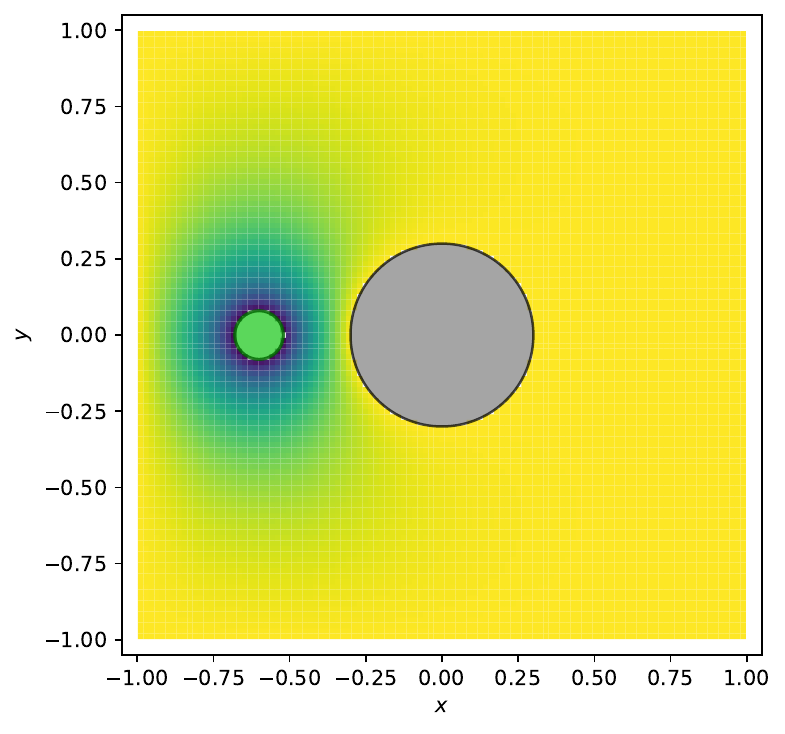}}
~
\subcaptionbox{Cross-shaped obstacle}
  {\includegraphics[width=0.35\linewidth]{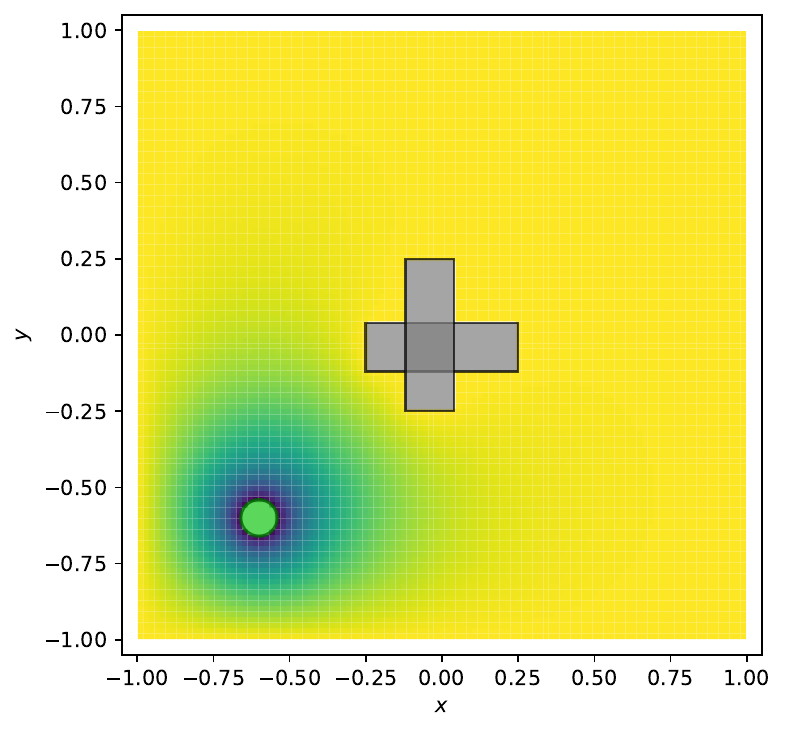}}
\caption{Harmonic potential fields ($\ell=7$).  Left: circular
obstacle with goal at $(-0.6,0)$.  Right: cross-shaped obstacle near the
origin; the convex obstacle corners correspond to re-entrant corners of the free domain.  $u=0$ at
the goal (green), $u=1$ on obstacles and outer boundary.}
\label{fig:field-comparison}
\end{figure}

\begin{table}[htbp]
\centering
\small
\caption{Self-convergence on the smooth disk geometry.
Reference: $\ell=8$ ($h\approx0.009$, $255\times255$ grid).
Bulk $L^\infty$ errors.}
\label{tab:selfconv-disk}
\begin{tabular}{lccccc}
  \toprule
  $\ell$ & $h$ & $\|u-u_8\|_\infty$ & order & $\|\nabla u-\nabla u_8\|_\infty$ & order \\
  \midrule
  5 & 0.072 & $1.74\times10^{-2}$ & --   & $9.23\times10^{-2}$ & -- \\
  6 & 0.036 & $2.50\times10^{-3}$ & 2.80 & $4.64\times10^{-2}$ & 0.99 \\
  7 & 0.018 & $6.03\times10^{-4}$ & 2.05 & $2.37\times10^{-2}$ & 0.97 \\
  \bottomrule
\end{tabular}
\end{table}

\rev{As reported in \Cref{tab:selfconv-disk}, the potential exhibits
approximately second-order self-convergence. The
gradient differences decrease at approximately first order in this experiment,
but those slopes are not discretization-order estimates: they compare two
discrete gradients over a mesh-dependent exclusion region. The primary gradient
order estimate is therefore the exact-error manufactured-solution test in
\Cref{sec:selfconv-mfg}, which gives orders $1.82$--$1.95$ on the smooth disk.}

\subsection{Corner geometry: cross-shaped obstacle}

The second test uses a cross-shaped obstacle formed by two perpendicular
rectangles. Each convex corner of the obstacle is a re-entrant corner of the free
domain with local opening angle $3\pi/2$. A reduction of convergence rate is expected.

\begin{table}[htbp]
\centering
\small
\caption{Self-convergence with the cross-shaped obstacle. The data are pre-asymptotic and are not used to estimate the corner-singularity exponent.}
\label{tab:selfconv-cross}
\begin{tabular}{lccccc}
  \toprule
  $\ell$ & $h$ & $\|u-u_8\|_\infty$ & order & $\|\nabla u-\nabla u_8\|_\infty$ & order \\
  \midrule
  5 & 0.072 & $8.93\times10^{-3}$ & --   & $8.60\times10^{-2}$ & -- \\
  6 & 0.036 & $6.43\times10^{-3}$ & 0.47 & $8.59\times10^{-2}$ & 0.00 \\
  7 & 0.018 & $2.33\times10^{-3}$ & 1.46 & $3.82\times10^{-2}$ & 1.17 \\
  \bottomrule
\end{tabular}
\end{table}

At $\ell=5$ the thinner bars are represented by only a few cells, and the
observed orders vary substantially across refinement levels in \Cref{tab:selfconv-cross}. They show that the
method remains stable and that gradient errors are larger than in the circular
case. 


\subsection{Manufactured solution verification}
\label{sec:selfconv-mfg}

To confirm that the self-convergence rates reflect the true
discretization order, we compare against the exact harmonic solution
$u(x,y)=e^x\cos y$ ($\Delta u=0$, all derivatives non-zero).  Dirichlet
data equal to the exact trace are imposed on all boundaries for both the
smooth disk and the cross-shaped obstacle.  Errors are evaluated directly against the exact solution and exact gradient at the grid points used in each norm.
\edit{\Cref{tab:mfg-both} reports the bulk errors, while
\Cref{tab:mfg-gradient-regions} resolves the gradient error over the full
interior and the first two interior grid layers.}

\begin{table}[htbp]
\centering
\small
\caption{Manufactured solution convergence on the smooth disk
and the cross-shaped obstacle.  Both show near-second-order bulk convergence for the potential and gradient.}
\label{tab:mfg-both}
\begin{tabular}{lccccccccc}
  \toprule
  & \multicolumn{4}{c}{Smooth disk} & \multicolumn{4}{c}{Cross-shaped obstacle} \\
  \cmidrule(lr){2-5}\cmidrule(lr){6-9}
  $\ell$ & $h$ & $\|u-u_e\|_\infty$ & o. & $\|\nabla u-\nabla u_e\|_\infty$ & o. & $\|u-u_e\|_\infty$ & o. & $\|\nabla u-\nabla u_e\|_\infty$ & o. \\
  \midrule
  5 & 0.072 & $1.08\times10^{-4}$ & -- & $1.60\times10^{-3}$ & -- & $1.31\times10^{-4}$ & -- & $1.54\times10^{-3}$ & -- \\
  6 & 0.036 & $2.69\times10^{-5}$ & 2.01 & $4.54\times10^{-4}$ & 1.82 & $3.15\times10^{-5}$ & 2.06 & $4.52\times10^{-4}$ & 1.77 \\
  7 & 0.018 & $6.81\times10^{-6}$ & 1.98 & $1.27\times10^{-4}$ & 1.84 & $8.00\times10^{-6}$ & 1.98 & $1.26\times10^{-4}$ & 1.84 \\
  8 & 0.009 & $1.77\times10^{-6}$ & 1.95 & $3.27\times10^{-5}$ & 1.95 & $2.07\times10^{-6}$ & 1.95 & $3.27\times10^{-5}$ & 1.95 \\
  \bottomrule
\end{tabular}
\end{table}

\begin{table}[htbp]
\centering
\footnotesize
\setlength{\tabcolsep}{3pt}
\caption{Exact Euclidean gradient errors over all interior nodes and the first
two interior grid layers. The discrete $L^2$ norm includes the area weight
$h^2$; ``angle'' is the maximum normalized-gradient angle in the two-layer
region, in radians.}
\label{tab:mfg-gradient-regions}
\begin{tabular}{llrrrrrrrrr}
  \toprule
  Geometry & $\ell$ & $L^\infty_{\rm all}$ & o. & $L^2_{\rm all}$ & o. & $L^\infty_{\rm 2layer}$ & o. & $L^2_{\rm 2layer}$ & o. & Angle \\
  \midrule
  Disk & 5 & $2.02\times10^{-3}$ & --   & $2.03\times10^{-3}$ & --   & $2.02\times10^{-3}$ & --   & $1.28\times10^{-3}$ & --   & $9.44\times10^{-4}$ \\
       & 6 & $5.38\times10^{-4}$ & 1.91 & $5.22\times10^{-4}$ & 1.96 & $5.38\times10^{-4}$ & 1.91 & $2.38\times10^{-4}$ & 2.43 & $2.45\times10^{-4}$ \\
       & 7 & $1.41\times10^{-4}$ & 1.93 & $1.33\times10^{-4}$ & 1.98 & $1.41\times10^{-4}$ & 1.93 & $4.33\times10^{-5}$ & 2.46 & $6.28\times10^{-5}$ \\
       & 8 & $3.63\times10^{-5}$ & 1.96 & $3.36\times10^{-5}$ & 1.98 & $3.63\times10^{-5}$ & 1.96 & $7.79\times10^{-6}$ & 2.47 & $1.60\times10^{-5}$ \\
  \midrule
  Cross & 5 & $2.01\times10^{-3}$ & --   & $2.04\times10^{-3}$ & --   & $2.01\times10^{-3}$ & --   & $1.25\times10^{-3}$ & --   & $1.04\times10^{-3}$ \\
        & 6 & $5.59\times10^{-4}$ & 1.85 & $5.24\times10^{-4}$ & 1.96 & $5.59\times10^{-4}$ & 1.85 & $2.34\times10^{-4}$ & 2.42 & $3.22\times10^{-4}$ \\
        & 7 & $1.47\times10^{-4}$ & 1.93 & $1.33\times10^{-4}$ & 1.98 & $1.47\times10^{-4}$ & 1.93 & $4.26\times10^{-5}$ & 2.45 & $8.90\times10^{-5}$ \\
        & 8 & $3.91\times10^{-5}$ & 1.91 & $3.36\times10^{-5}$ & 1.98 & $3.91\times10^{-5}$ & 1.91 & $7.68\times10^{-6}$ & 2.47 & $2.35\times10^{-5}$ \\
  \bottomrule
\end{tabular}
\end{table}

\edit{The full-interior gradient errors converge at orders 1.85--1.98 in the
last three refinements, so the near-second-order result is not an artifact of a
boundary exclusion band. The first-two-layer $L^\infty$ errors converge at the
same rates, while their area-weighted $L^2$ errors decrease faster because the
physical area of that layer shrinks with $h$. At $\ell=8$, the corresponding
two-layer potential $L^\infty$ errors are $2.29\times10^{-7}$ for the disk and
$2.75\times10^{-7}$ for the cross. We report these regional results without a
superconvergence claim.}

Both geometries show near-second-order bulk convergence for $u$ and
$\nabla u$. The cross-shaped boundary does not reduce the rate for this test
because the prescribed exact solution is harmonic and smooth in a neighborhood
of the entire computational box. The comparison with
\Cref{tab:selfconv-disk,tab:selfconv-cross} also shows that gradient
self-convergence is more sensitive to the choice of comparison region than the
exact-error experiment; no general superconvergence claim is made.

\subsection{Block update vs.\ full trace under motion}
\label{sec:selfconv-block}

\edit{The fourth test verifies that Algorithm~\ref{alg:block-update} preserves
the convergence behavior of Algorithm~\ref{alg:full-trace}. The outer domain is
$[-1,1]^2$, the goal is the radius-$0.05$ disk centered at $(-0.80,0.80)$,
and a circular obstacle of radius $0.15$ moves through five equally spaced
centers from $(0.40,0.20)$ to $(-0.40,-0.20)$;
at each position both algorithms solve the field at $\ell=5,6,7,8$. The
$\ell=8$ Algorithm~\ref{alg:full-trace} solution is the reference, and errors
are measured in the bulk as before. Algorithm~\ref{alg:block-update} uses a
capsule containing the complete obstacle
trajectory, giving a consistent static/dynamic partition at every frame. Its
static block is factored once at each refinement level. The paired convergence
results are reported in \Cref{tab:selfconv-block}.}

\begin{table}[htbp]
\centering
\small
\caption{\edit{Self-convergence of Algorithm~\ref{alg:full-trace} (F, full
trace) and Algorithm~\ref{alg:block-update} (B, block update) for a translating
obstacle, averaged over five positions.
Direct same-grid differences are summarized in the text.}}
\label{tab:selfconv-block}
\begin{tabular}{lccccc}
  \toprule
  $\ell$ & $h$ & $\|u_{\rm F}-u_8\|_\infty$ & $\|u_{\rm B}-u_8\|_\infty$ & $\|\nabla u_{\rm F}-\nabla u_8\|_\infty$ & $\|\nabla u_{\rm B}-\nabla u_8\|_\infty$ \\
  \midrule
  5 & 0.072 & $2.15\times10^{-2}$ & $2.15\times10^{-2}$ & $1.13\times10^{-1}$ & $1.13\times10^{-1}$ \\
  6 & 0.036 & $8.05\times10^{-3}$ & $8.05\times10^{-3}$ & $8.87\times10^{-2}$ & $8.87\times10^{-2}$ \\
  7 & 0.018 & $1.60\times10^{-3}$ & $1.60\times10^{-3}$ & $6.02\times10^{-2}$ & $6.02\times10^{-2}$ \\
  \bottomrule
\end{tabular}
\end{table}

\rev{The paired columns in \Cref{tab:selfconv-block} agree to three significant
figures at every resolution.
The direct same-grid Algorithm~\ref{alg:block-update}--Algorithm~\ref{alg:full-trace}
differences, evaluated separately
from the reference-error table, average approximately $10^{-10}$ for $u$ and
$10^{-9}$ for $\nabla u$, consistent with \Cref{prop:schur-equivalence} up to
solver and roundoff error.} Least-squares potential orders are 1.88 for both formulations. The much lower
gradient slope is not interpreted as an asymptotic order because only three
coarse levels are compared and the norm excludes a mesh-dependent $3h$ boundary
band. The nominal static fraction grows from 51\% at $\ell=6$ to approximately
90\% at $\ell=7$ and $\ell=8$. \edit{The one-time factorization count at every
level confirms that these columns were generated with the cached block update.}

\subsection{Same-grid agreement under translation and topology change}
\label{sec:schur-comparison}

\edit{\paragraph{Geometry configurations.}
Both comparisons use the outer square $[-1,1]^2$ and a circular goal of radius
$0.05$ centered at $(-0.80,0.80)$. In the translation test, one obstacle of
radius $0.15$ moves linearly from center $(0.30,0.15)$ to
$(-0.30,-0.15)$. We sample 21 positions at $\ell=7$ and seven positions at
$\ell=8$, and the dynamic set is prescribed by the capsule swept by this
circle. In the topology-change test, the obstacle is the union of two
radius-$0.22$ circles centered at $(0,\pm d)$. The offset $d$ increases from
$0$ to $0.34$ and returns to $0$ over 13 frames at $\ell=7$, changing the
union from one merged component to two separated components and back. Its
dynamic set lies in the fixed radius-$0.56$ circular envelope. Thus the
geometry, motion schedule, and reusable static boundary are fixed before the
two algorithms are compared.}

\edit{At every sampled geometry, Algorithm~\ref{alg:full-trace} supplies the
reference field and Algorithm~\ref{alg:block-update} supplies the cached block
solution. The execution order is rotated between frames to limit timing bias.
Algorithm~\ref{alg:block-update} factors $B_{\mathrm{ss}}$ once per run; its
maximum block residual remains near $10^{-14}$, and its field agrees with the
Algorithm~\ref{alg:full-trace} field to about $10^{-9}$.
\Cref{tab:schur-vs-full} reports the same-grid comparisons.}

\begin{table}[htbp]
\centering
\small
\caption{Same-grid algebraic comparisons for the configurations specified in
\Cref{sec:schur-comparison}. Field differences are maximum absolute differences
from the Algorithm~\ref{alg:full-trace} solution over all tested frames; the
residual is for Algorithm~\ref{alg:block-update}.}
\label{tab:schur-vs-full}
\begin{tabular}{lcccc}
  \toprule
  Case & Frames & $(n_{\mathrm s},n_{\mathrm d})$ & Algorithm~\ref{alg:block-update} diff. & Block residual \\
  \midrule
  Translation, $\ell=7$ & 21 & $(456,44\text{--}46)$ & $1.10\times10^{-9}$ & $6.33\times10^{-15}$ \\
  Translation, $\ell=8$ & 7 & $(920,92\text{--}93)$ & $1.01\times10^{-9}$ & $1.07\times10^{-14}$ \\
  Merge--split--merge, $\ell=7$ & 13 & $(456,68\text{--}136)$ & $1.03\times10^{-9}$ & $6.22\times10^{-15}$ \\
  \bottomrule
\end{tabular}
\end{table}

\edit{For the topology-change schedule, the dynamic fraction ranges from
13.0\% to 23.0\%, and the refreshed blocks contain 24.3--40.7\% of the full
dense entries. A separate diagnostic recomputes every cached static stencil at
each frame and confirms its invariance; this diagnostic is excluded from the
performance timing. The reduction in dynamic unknowns applies only to the
final reduced solve: applying the cached factor to the coupled blocks and
forming the Schur complement must also be counted, so no cubic speedup is
claimed. Together with the moving-circle convergence test in
\Cref{sec:selfconv-block}, these results verify both discretization-level and
same-grid agreement of the two algorithms.}

\subsection{Subgrid-position robustness}
\label{sec:position-robustness}

To assess sensitivity to the relative grid position, a small circular obstacle
($r=0.06$) was translated diagonally through $30$ positions over a displacement
of $0.015625\approx0.87h$ at $\ell=7$. At each position we recorded
$|\gamma^-|$, the dynamic-set size $n_d$, $\kappa_2(S_-)$, and total solve time.
\edit{\Cref{tab:position-robustness} summarizes the resulting variation.}

\begin{table}[htbp]
  \centering
  \small
  \caption{Subgrid-position robustness over 30 positions and a diagonal
  coordinate displacement $\Delta x=\Delta y=0.015625\approx0.87h$.
  CV = coefficient of variation.}
  \label{tab:position-robustness}
  \begin{tabular}{lcccc}
    \toprule
    Quantity & Mean & Std & Min/Max & CV \\
    \midrule
    $|\gamma^-|$           & 482.37  & 0.48 & 482/483 & 0.0010 \\
    $n_d$ (dynamic points) & 16.37   & 0.48 & 16/17 & 0.0294 \\
    $\kappa_2(S_-)$        & 2509.05 & 2.40 & 2507.21/2512.23 & 0.0010 \\
    Solve time (ms)        & 214.40  & 1.49 & 212.29/219.98 & 0.0069 \\
    \bottomrule
  \end{tabular}
\end{table}

In this experiment all coefficients of variation are below 3\%, and those of
$|\gamma^-|$, $\kappa_2(S_-)$, and time are below 1\%. \edit{No significant
position dependence was observed over this particular subcell displacement;
the experiment does not establish grid-position independence for arbitrary
geometry or motion.}

\subsection{Fixed-bulk convergence and gradient angle error}
\label{sec:fixed-bulk}

Self-convergence experiments typically exclude a mesh-dependent band of width
$3h$ near boundaries, making comparisons across resolutions less uniform.  We
repeat the smooth-disk self-convergence test using a \emph{fixed} physical
exclusion distance of $0.08$~units from all boundaries. This distance
corresponds to approximately nine cells at $\ell=8$ and is held constant across all four
resolutions.  We additionally report the angular error of the normalized
gradient, which controls the descent direction in the harmonic navigation
application.
\edit{The fixed-region potential, gradient-magnitude, and angular errors are
reported together in \Cref{tab:fixed-bulk}.}

\begin{table}[htbp]
  \centering
  \small
  \caption{Fixed-bulk self-convergence (exclusion distance $0.08$ units,
  approximately nine cells at $\ell=8$). Angular error measures $\arccos(\widehat{\nabla u}_h
  \cdot \widehat{\nabla u}_8)$ between normalized gradients.}
  \label{tab:fixed-bulk}
  \begin{tabular}{lccccc}
    \toprule
    $\ell$ & $h$ & $\|u-u_8\|_\infty$ & $\|\nabla u-\nabla u_8\|_\infty$ & ang.\ error (max) & ang.\ error (mean) \\
    \midrule
    5 & 0.072 & $1.74\times10^{-2}$ & $9.23\times10^{-2}$ & 0.062 & 0.015 \\
    6 & 0.036 & $2.50\times10^{-3}$ & $4.64\times10^{-2}$ & 0.025 & 0.006 \\
    7 & 0.018 & $4.92\times10^{-4}$ & $1.73\times10^{-2}$ & 0.009 & 0.003 \\
    \bottomrule
  \end{tabular}
\end{table}

With a fixed physical exclusion region the potential errors decrease
monotonically; the angular gradient error at $\ell=7$ is below
$0.6^\circ$~($0.009$~rad) in the bulk, confirming that the descent direction
is already well resolved at moderate grid sizes.  The gradient magnitude
error is larger than the directional error, reflecting the well-known
sensitivity of $|\nabla u|$ near curved boundaries.

\subsection{Re-solve cost and component-level profiling}
\label{sec:profiling}
\label{sec:factor-compare}

\edit{We now time the two configurations defined in
\Cref{sec:schur-comparison}. The translation benchmark uses the radius-$0.15$
circle moving from $(0.30,0.15)$ to $(-0.30,-0.15)$, with 21 frames at
$\ell=7$ and seven frames at $\ell=8$; the topology benchmark uses the
13-frame merge--split--merge schedule at $\ell=7$. We benchmark
Algorithm~\ref{alg:full-trace} and Algorithm~\ref{alg:block-update} at every
sampled geometry and rotate their execution order among frames. The steady
averages in \Cref{tab:profiling} exclude the first frame, where
Algorithm~\ref{alg:block-update} initializes and factors $B_{\mathrm{ss}}$.}

\begin{table}[htbp]
  \centering
  \small
  \caption{\edit{Measured steady per-frame times for the two algorithms.
  ``Gain'' is the reduction of Algorithm~\ref{alg:block-update} relative to
  Algorithm~\ref{alg:full-trace}. Times are
  in milliseconds.}}
  \label{tab:profiling}
  \begin{tabular}{lcrrrr}
    \toprule
    Case & Grid & Frames & Algorithm~\ref{alg:full-trace} & Algorithm~\ref{alg:block-update} & Gain \\
    \midrule
    Translation, $\ell=7$ & $127^2$ & 21 & 165.29 & 161.41 & 2.4\% \\
    Translation, $\ell=8$ & $255^2$ & 7  & 392.88 & 371.39 & 5.5\% \\
    Topology change, $\ell=7$ & $127^2$ & 13 & 209.67 & 204.52 & 2.5\% \\
    \bottomrule
  \end{tabular}
\end{table}

\edit{For the translating circle, Algorithm~\ref{alg:block-update} reduces the
steady mean from \SI{165.29}{\milli\second} to
\SI{161.41}{\milli\second} at $\ell=7$ and from
\SI{392.88}{\milli\second} to \SI{371.39}{\milli\second} at $\ell=8$,
corresponding to 2.4\% and 5.5\% gains. The same-grid field differences for
these runs are reported in \Cref{tab:schur-vs-full}. At $\ell=7$,
$n=500$--$502$, $n_{\mathrm s}=456$, and
$n_{\mathrm d}=44$--$46$; at $\ell=8$, the corresponding values are
$1012$--$1013$, $920$, and $92$--$93$. Thus the updated blocks contain only
16.8--17.5\% and 17.4--17.5\% of the full dense entries, respectively. The
static factorization occurs once per run and costs
\SI{0.65}{\milli\second} at $\ell=7$ and \SI{2.74}{\milli\second} at
$\ell=8$. First-frame assembly, which includes $B_{\mathrm{ss}}$, costs
\SI{10.05}{\milli\second} and \SI{37.57}{\milli\second}; the steady block
refresh is substantially smaller.}

\edit{\Cref{tab:block-components} separates that steady Algorithm~\ref{alg:block-update}
time into geometry, block assembly, Schur solve, layer-trace, and reconstruction
components.}

\begin{table}[htbp]
  \centering
  \small
  \caption{\edit{Steady component averages for Algorithm~\ref{alg:block-update},
  excluding first-frame initialization. Times are in milliseconds.}}
  \label{tab:block-components}
  \resizebox{\linewidth}{!}{%
  \begin{tabular}{lrrrrrr}
    \toprule
    Case & Geometry & Block assembly & Schur solve & Layer trace & DST & Geometry share \\
    \midrule
    Translation, $\ell=7$ & 153.28 & 1.30 & 0.36 & 3.11 & 1.01 & 95.0\% \\
    Translation, $\ell=8$ & 342.84 & 4.68 & 1.43 & 14.19 & 3.51 & 92.3\% \\
    Topology change, $\ell=7$ & 194.37 & 3.30 & 0.58 & 4.21 & 1.05 & 95.0\% \\
    \bottomrule
  \end{tabular}
  }
\end{table}

\edit{The component breakdown in \Cref{tab:block-components} shows why the
algebraic reduction produces only a modest end-to-end gain. In the
topology-change test, $n$ varies from 524 to 592 while
$n_{\mathrm s}=456$ and $n_{\mathrm d}=68$--$136$.
Algorithm~\ref{alg:block-update} remains faster than
Algorithm~\ref{alg:full-trace} even though the refreshed fraction
rises to 40.7\%. The block update is therefore active and measurable, but its
end-to-end benefit is modest because rebuilding the boundary classification,
intersections, and local collocation data consumes 93--95\% of its total time.
The code intentionally does not refactor the static block at each frame. The
next performance target is a swept-band geometry update; kernel compression is
also needed when the boundary size becomes much larger.}



\begin{remark}

All experiments use the LGF-BAE solver with a second-order five-point
Laplacian, the single-layer potential, and Dirichlet boundary conditions.
The physical domain is $[-1,1]^2$ and the auxiliary finite box adds a padding
length $l=0.15$, giving the grid extent $[-1.15,1.15]^2$. \edit{We denote the
refinement level by $\ell$ and the number of interior grid points per coordinate
by $N_g=2^\ell-1$. Thus $h=2.30/2^\ell$, giving
$h\approx0.072,0.036,0.018,0.009$ for $\ell=5,6,7,8$ on grids of size
$31^2,63^2,127^2,255^2$. The Dirichlet sine transform is implemented by FFTs
of length $2(N_g+1)$ (256 at $\ell=7$); this transform length is not a separate
free-space padding box.} The LGF kernel is tabulated once using near-field
quadrature and the far-field expansion in \cref{eq:lgf-asymp}, with the code
switching at radius 30 grid points. The recorded environment is an
Apple-silicon MacBook Pro running Python~3.14.3, NumPy~2.4.6, and
SciPy~1.17.1. \edit{The recorded machine is a MacBook Pro Mac16,8 with an
Apple M4 Pro processor (14 CPU cores) and 48~GB memory.}

\end{remark}

\section{Application: harmonic path planning}
\label{sec:application}

The manufactured-solution tests show near-second-order bulk convergence of
both $u_h$ and $\grad_h u$, and Algorithms~\ref{alg:full-trace} and
\ref{alg:block-update} agree to numerical
precision. We next use path planning as an
application. The experiments below show what a reliable, gradient-accurate
repeated elliptic solver enables, not novelty in harmonic navigation itself.
The planner is not proposed as a state-of-the-art robotics planner. It is
used as a gradient-sensitive application demonstrating repeated elliptic solves
on changing implicit domains.

We demonstrate the method on static and dynamic path-planning benchmarks.
\edit{The static single-path and multi-query scripts compute the field with
Algorithm~\ref{alg:full-trace}, whereas the moving-geometry scripts compute the
changing fields with Algorithm~\ref{alg:block-update}. Both use the normalized
NAG path update described below.} Path extraction is a separate post-processing
step.

\subsection{Path extraction method}
\label{sec:paths}

We use normalized NAG descent as a low-cost path-extraction method:
 \begin{equation}
\mathbf{y}_k = \xx_k + \beta(\xx_k-\xx_{k-1}),\qquad
\xx_{k+1} = \xx_k - \eta\,\frac{\grad u(\mathbf{y}_k)}{\norm{\grad u(\mathbf{y}_k)}},
\end{equation}
with momentum $\beta=0.9$, requiring one gradient evaluation per step. The
static tests use $\eta=h/4$; the moving tests use the fixed step sizes stated
with each experiment because they take 15 agent steps per geometry frame.
The static demonstrations apply an additional mask-based endpoint correction
when a proposed step enters an obstacle. In every dynamic driver, momentum is
retained when the field changes: the gradient is sampled at the look-ahead
point $\mathbf y_k$, but the accepted displacement starts from $\xx_k$. The
moving-geometry implementation performs neither segment-level collision
detection nor endpoint projection or step rejection.

\subsection{Single-path planning on static benchmarks}
\label{sec:static-paths}

\Cref{fig:static-paths} shows single NAG-descent paths on the two
representative geometries introduced in \Cref{sec:self-convergence}.
The agent follows $-\grad u/\|\grad u\|$ from a specified start
to the goal (green).  Both paths arrive successfully; the path length
reflects the harmonic field's tendency to take smooth, obstacle-avoiding
routes rather than distance-optimal shortcuts.  On the cross-shaped obstacle
the path curves around the obstacle corners without being explicitly
programmed to do so---the Dirichlet condition $u=1$ on the obstacle
boundary creates a repulsive gradient that naturally steers the agent
along the obstacle's convex hull.

\begin{figure}[htbp]
\centering
\subcaptionbox{Smooth disk ($L=1.51$)}
  {\includegraphics[width=0.35\linewidth]{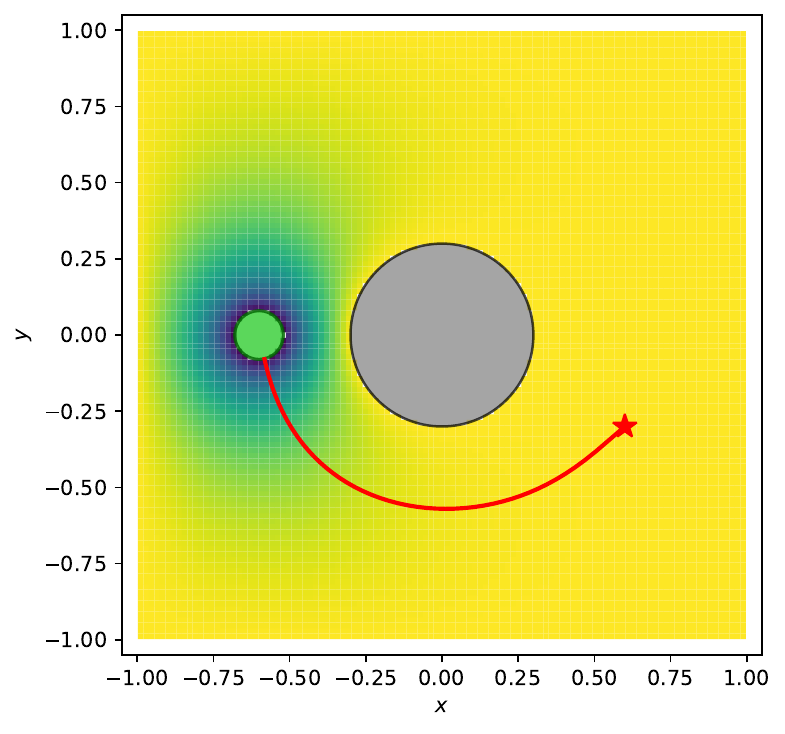}}
  ~
\subcaptionbox{Cross-shaped obstacle ($L=2.05$)}
  {\includegraphics[width=0.35\linewidth]{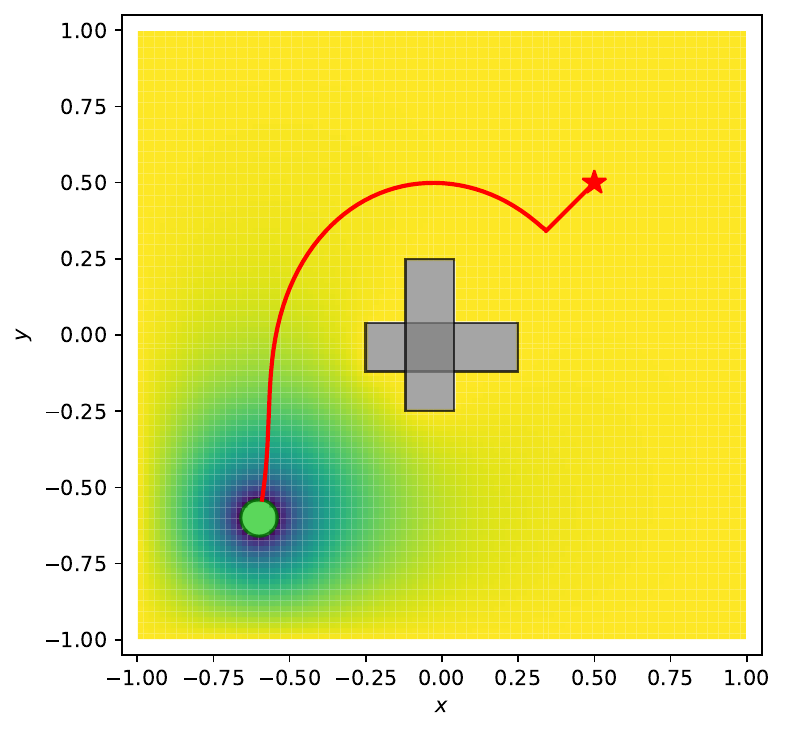}}
\caption{Single-path planning on the smooth disk and cross-shaped
obstacle geometries ($\ell=7$, NAG descent).  Both paths arrive; the
cross-shaped obstacle induces a natural avoidance maneuver around its corners.}
\label{fig:static-paths}
\end{figure}

A single field solve serves unlimited start positions. \Cref{fig:multiquery}
shows 50 paths from randomly scattered starts converging to a single goal
after one \SI{3.5}{\second} solve. The displayed field cost therefore amortizes
to approximately \SI{71}{\milli\second} per path at 50 queries. The reported
per-path extraction cost is \SI{22}{\milli\second}, giving a total of
approximately \SI{93}{\milli\second} per path; only the field component
continues to decrease as the query count grows.

\begin{figure}[htbp]
\centering
\includegraphics[width=0.35\linewidth]{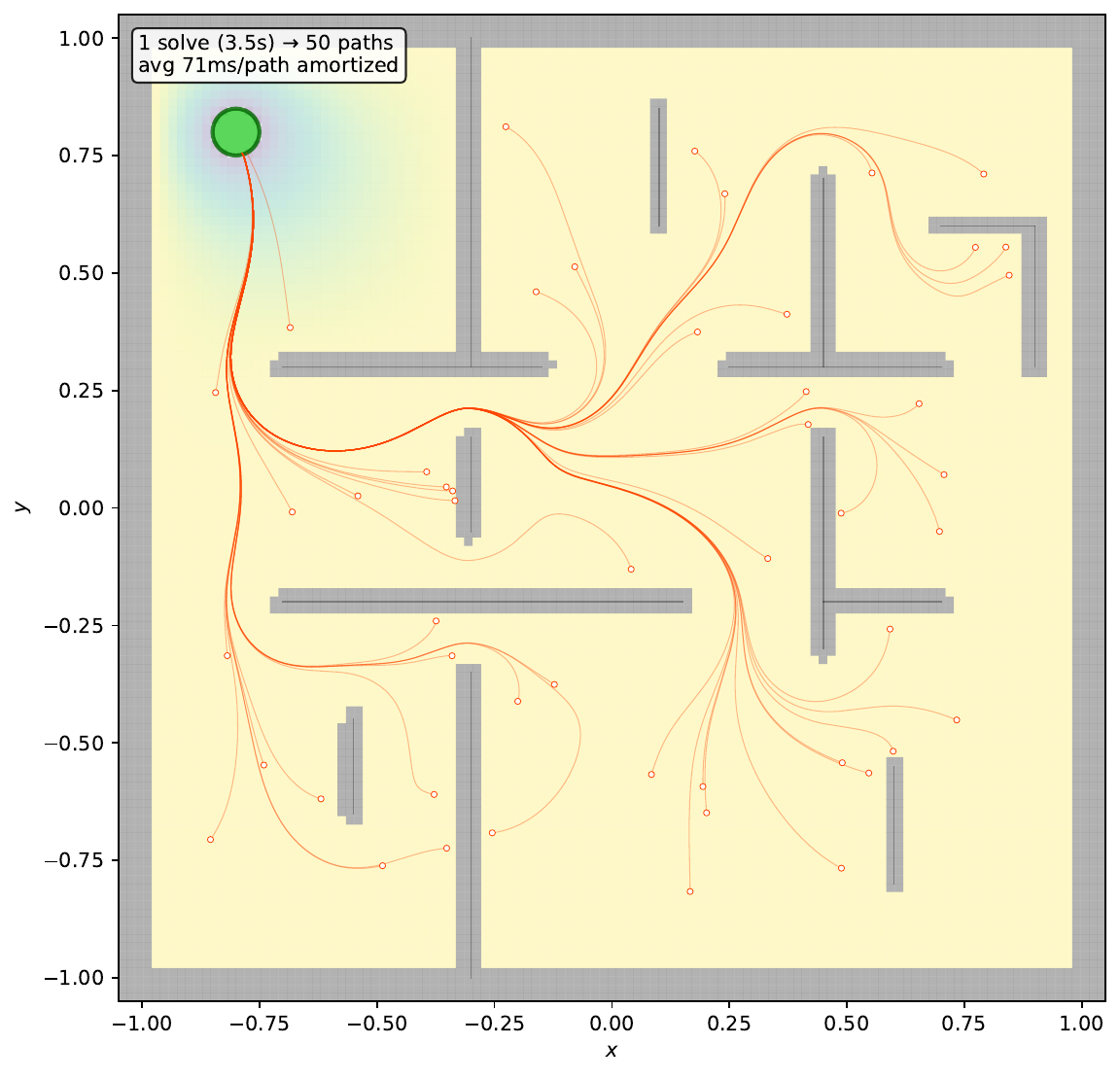}
\caption{One field solve serves 50 paths from random starts. The displayed
\SI{3.5}{\second} field solve amortizes to approximately
\SI{71}{\milli\second} per path; including the reported
\SI{22}{\milli\second} extraction cost gives approximately
\SI{93}{\milli\second} per path.}
\label{fig:multiquery}
\end{figure}


\FloatBarrier
\subsection{Dynamic replanning with moving obstacles}
\label{sec:app-dynamic}

\rev{For the single-moving-obstacle experiment,} the start and goal are
$(0.75,-0.75)$ and $(-0.80,0.80)$, respectively. An
obstacle drifts diagonally from $(0.5,0.5)$ to $(-0.5,-0.5)$ over 40
obstacle-motion frames; at each frame the field is re-solved with
Algorithm~\ref{alg:block-update}
(\Cref{sec:method-dynamic}) and the agent takes 15 normalized NAG steps with
$\beta=0.9$. Three step sizes, $\eta=0.0020$, $0.0025$, and $0.0040$, are
interpreted as slow, medium, and fast agent speeds, respectively. If an
agent has not reached the goal by the end of the
obstacle-motion schedule, it continues on the final field; this is why some
snapshot indices exceed 40.

\Cref{fig:single-moving-obstacle} shows the agent trajectories with one
obstacle. The Dirichlet value $u=1$ on the obstacle boundary creates a repulsive
gradient, and the medium-speed trajectory has the longest reported path. All
three agents reach the goal with path lengths $2.40$, $2.92$, and $2.32$ for
the slow, medium, and fast cases, respectively. The corresponding step counts
are 1202, 1166, and 579. The 40 distinct obstacle fields are computed once and
reused across the three speed trials. \edit{The temporal progression of each
speed case is resolved in \Cref{fig:single-moving-obstacle-keyframes}. Because the script
does not test segment--obstacle intersections, these runs support a qualitative
avoidance demonstration but not a verified zero-collision claim.}

\paragraph{Algorithm used and verification}
\edit{The overview and snapshot figures in this subsection were regenerated
with Algorithm~\ref{alg:block-update}. The swept circular obstacle is enclosed
by a capsule, the invariant $B_{\mathrm{ss}}$ block is factored once, and the
maximum block residual over the 40 distinct fields is
$6.44\times10^{-15}$. The representative same-grid comparison in
\Cref{sec:schur-comparison} shows that these block-update fields agree with
Algorithm~\ref{alg:full-trace} to about $10^{-9}$.}

\begin{figure}[htbp]
\centering
\subcaptionbox{Slow, snapshot 1}
  {\includegraphics[width=0.29\linewidth,trim=0 0 0 24,clip]{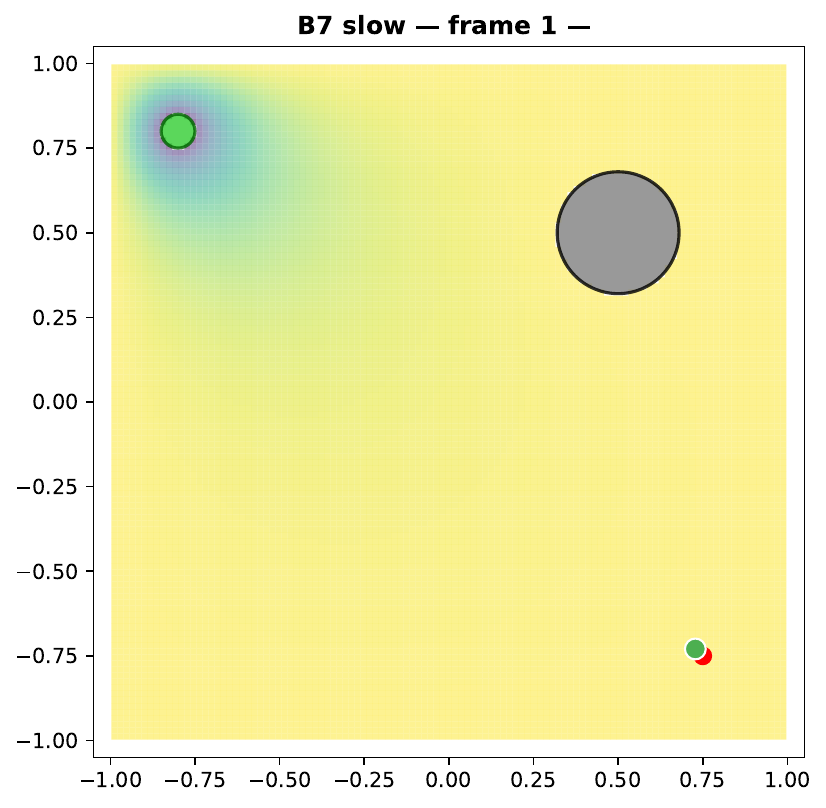}}
\subcaptionbox{Slow, snapshot 21}
  {\includegraphics[width=0.29\linewidth,trim=0 0 0 24,clip]{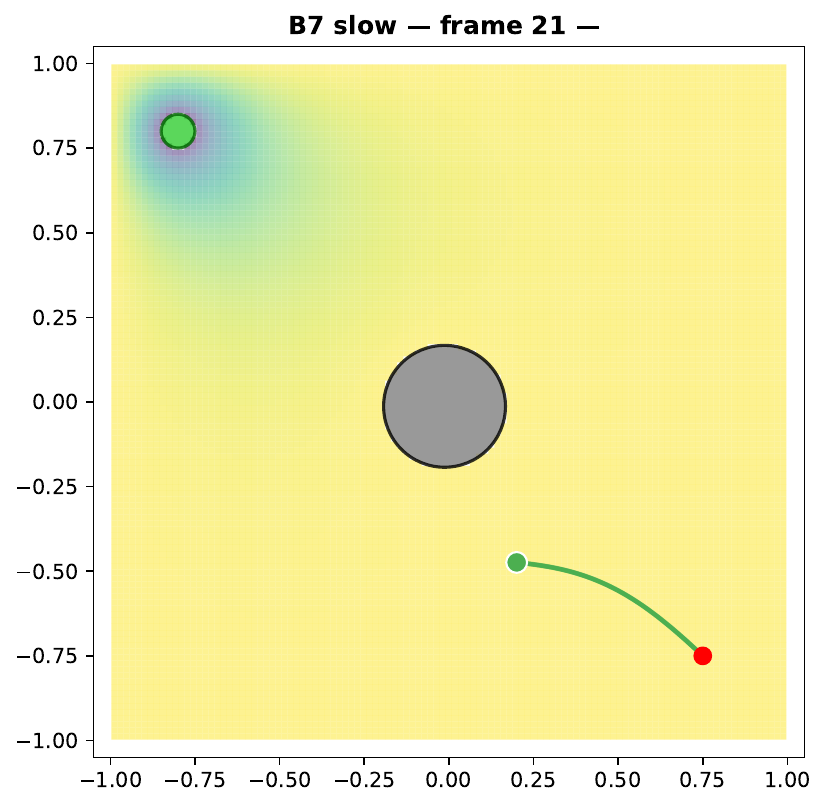}}
\subcaptionbox{Slow, snapshot 47}
  {\includegraphics[width=0.29\linewidth,trim=0 0 0 24,clip]{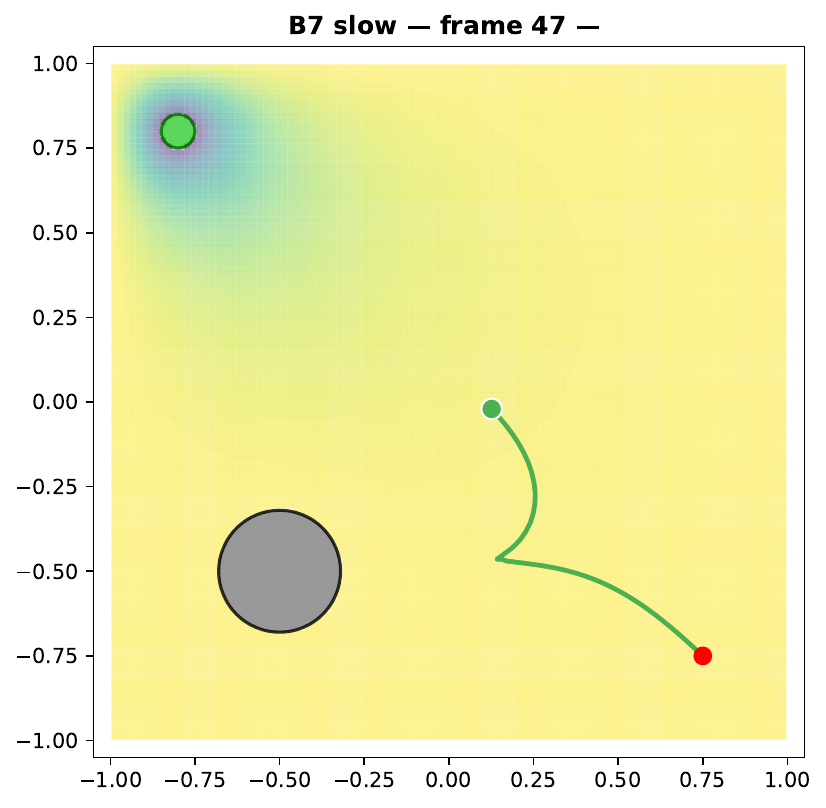}}\\[6pt]
\subcaptionbox{Medium, snapshot 1}
  {\includegraphics[width=0.29\linewidth,trim=0 0 0 24,clip]{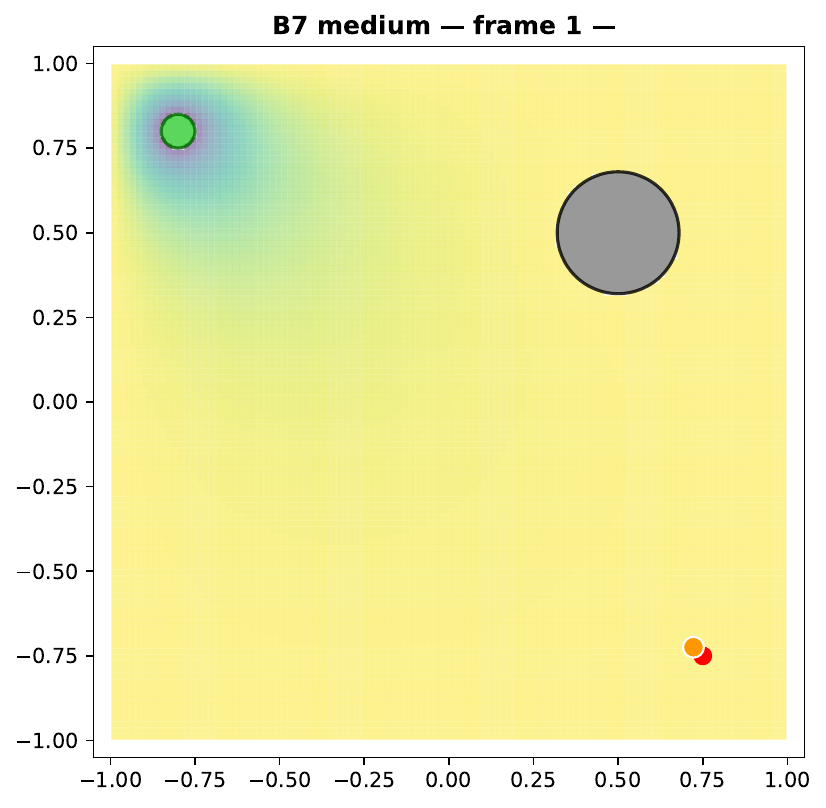}}
\subcaptionbox{Medium, snapshot 21}
  {\includegraphics[width=0.29\linewidth,trim=0 0 0 24,clip]{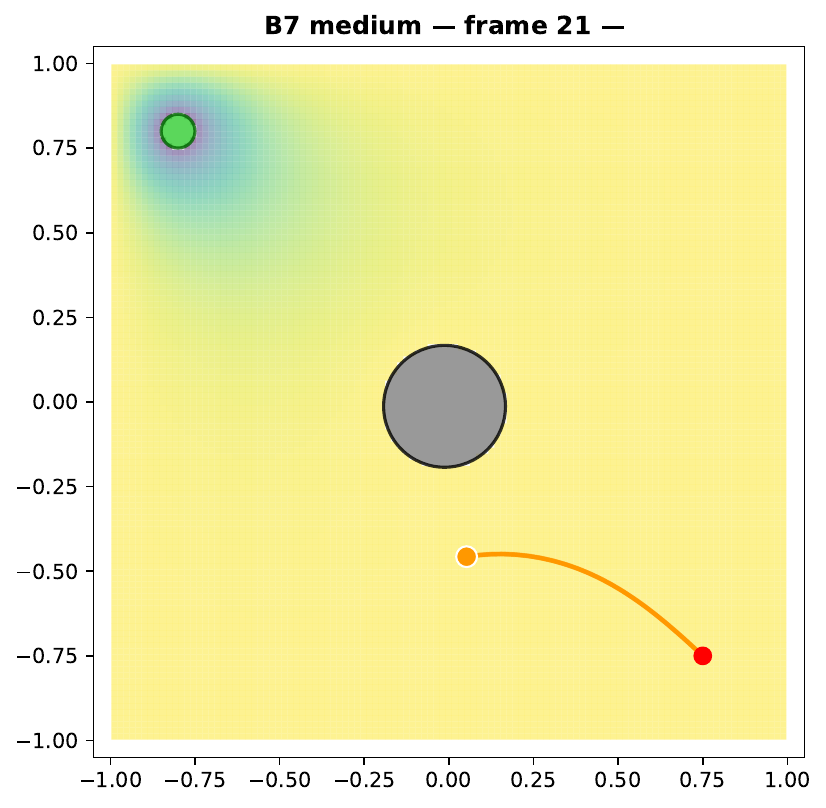}}
\subcaptionbox{Medium, snapshot 46 (arrival)}
  {\includegraphics[width=0.29\linewidth,trim=0 0 0 24,clip]{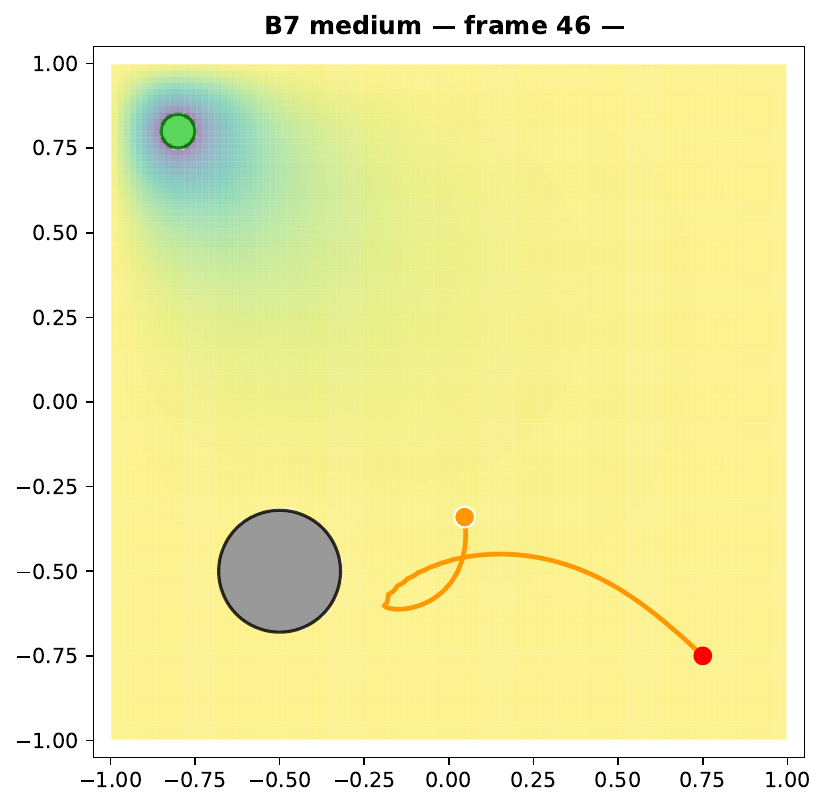}}\\[6pt]
\subcaptionbox{Fast, snapshot 1}
  {\includegraphics[width=0.29\linewidth,trim=0 0 0 24,clip]{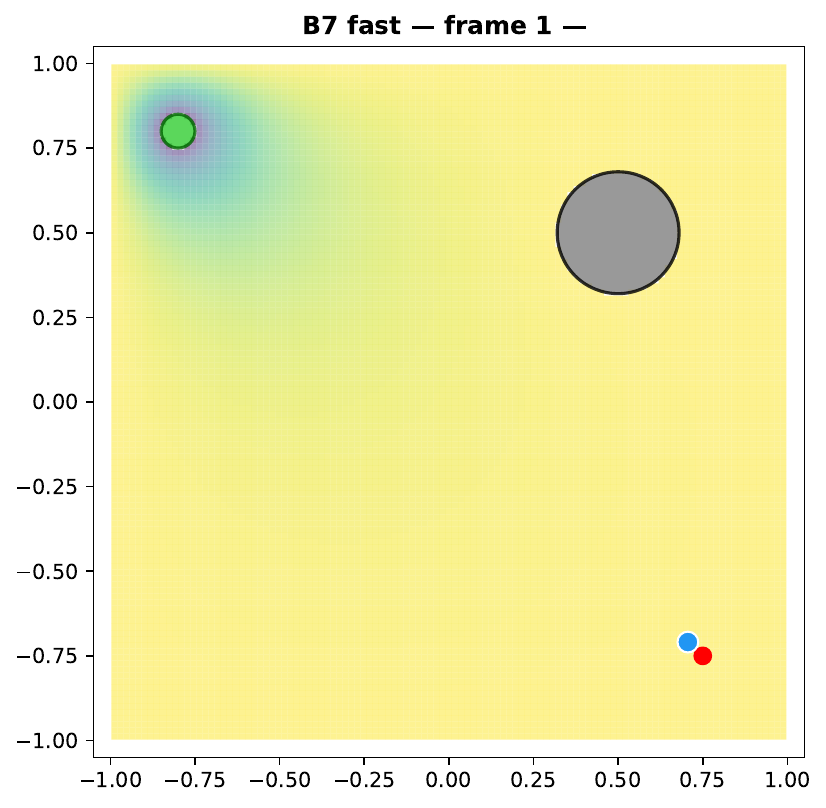}}
\subcaptionbox{Fast, snapshot 21}
  {\includegraphics[width=0.29\linewidth,trim=0 0 0 24,clip]{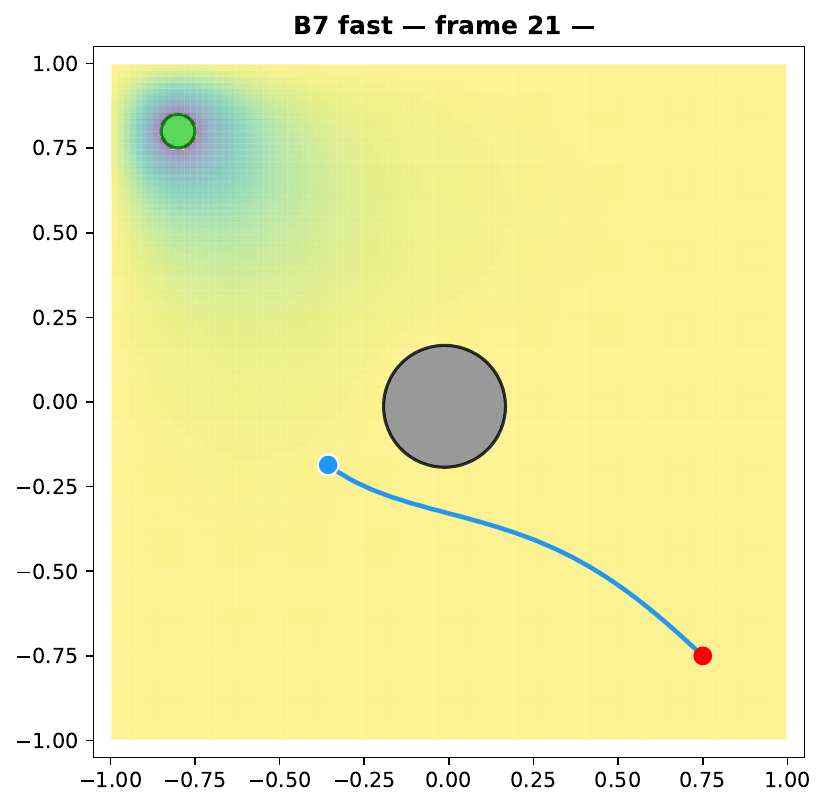}}
\subcaptionbox{Fast, snapshot 39 (arrival)}
  {\includegraphics[width=0.29\linewidth,trim=0 0 0 24,clip]{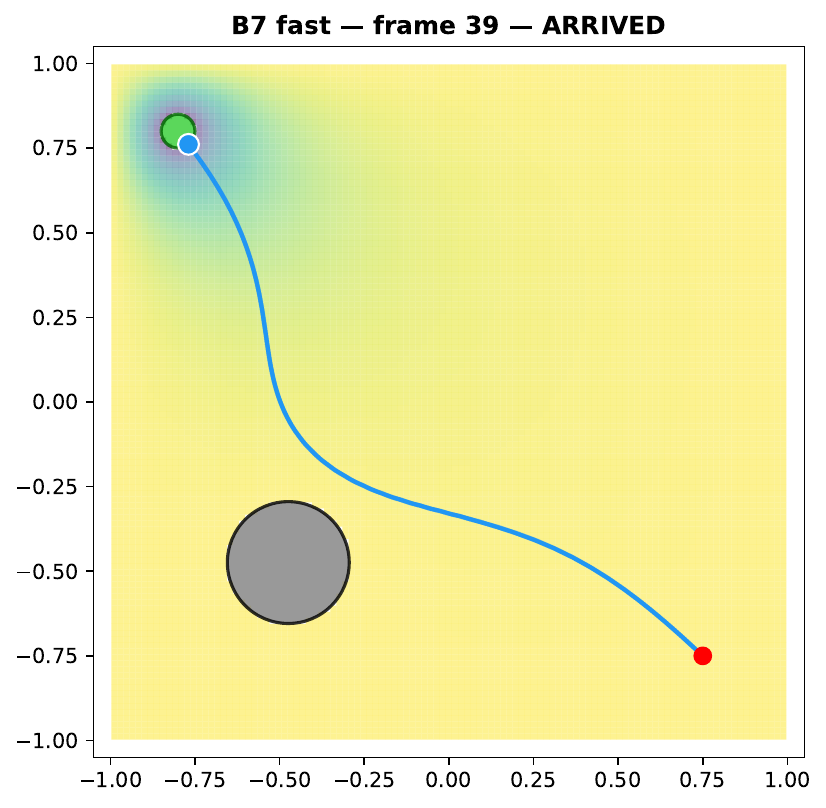}}
\caption{\edit{Algorithm~\ref{alg:block-update} with normalized NAG: selected
key frames for the single moving-obstacle experiment.
Slow (top): continued motion on the final field. Medium (middle): the longest
reported detour. Fast (bottom): arrival during the obstacle-motion schedule.}}
\label{fig:single-moving-obstacle-keyframes}
\end{figure}

\begin{figure}[htbp]
\centering
\includegraphics[width=0.35\linewidth]{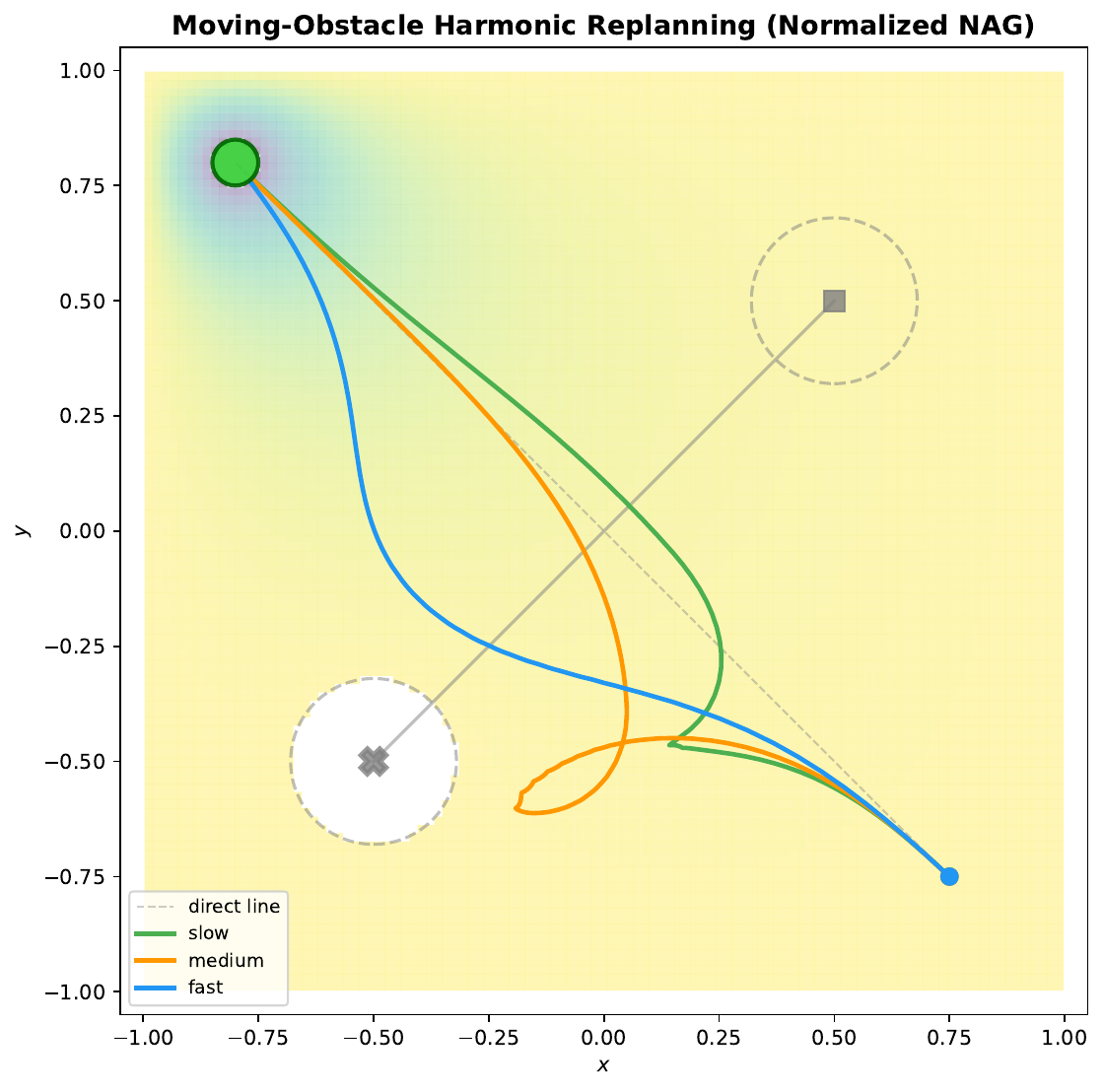}
\caption{\edit{Generated with Algorithm~\ref{alg:block-update} and normalized
NAG ($\beta=0.9$).} One moving obstacle and three step sizes. The dashed gray line is
the direct path without the obstacle. The medium-speed trajectory (orange) has
the longest reported path ($2.92$ vs.\ $2.32$--$2.40$). Collision-free motion
is not programmatically verified.}
\label{fig:single-moving-obstacle}
\end{figure}

\rev{A second experiment uses two obstacles on crossing paths.
\Cref{fig:two-moving-obstacles,fig:two-moving-obstacles-keyframes} show the
aggregate trajectory and selected frames, respectively. \edit{These fields are
computed with Algorithm~\ref{alg:block-update}; the static block is factored
once and the maximum block residual is $6.22\times10^{-15}$. With
$\eta=0.0025$, the normalized-NAG path reaches the goal in 1222 steps with
length $3.05$.}}
\begin{figure}[htbp]
\centering
\includegraphics[width=0.35\linewidth]{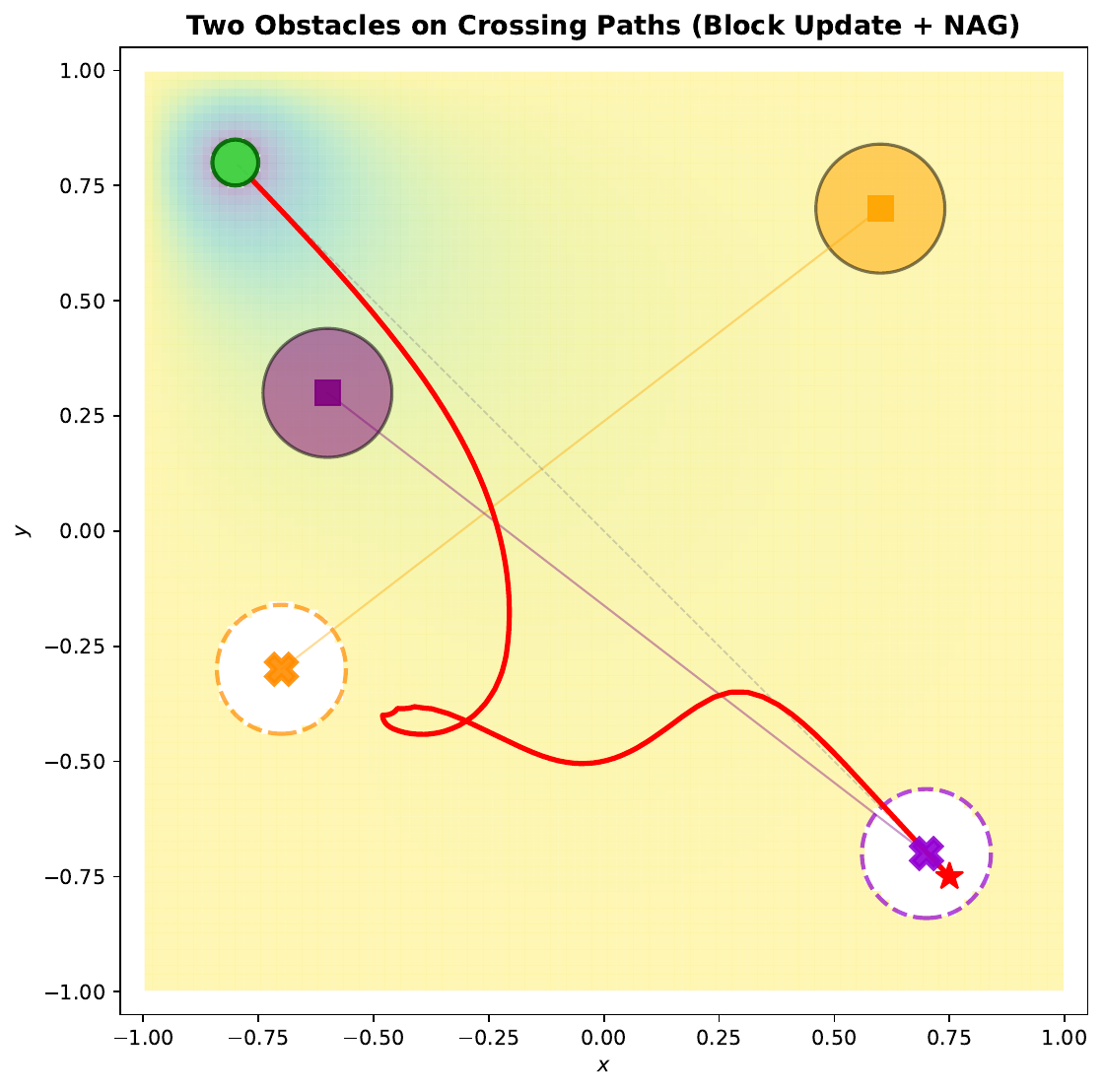}
\caption{\edit{Generated with Algorithm~\ref{alg:block-update} and normalized
NAG.} Two obstacles on crossing paths. The sampled agent trajectory (red)
passes through the changing gap. The script contains no segment-level collision
test, so the figure is qualitative.}
\label{fig:two-moving-obstacles}
\end{figure}

\begin{figure}[htbp]
\centering
\subcaptionbox{Frame 1}
  {\includegraphics[width=0.31\linewidth]{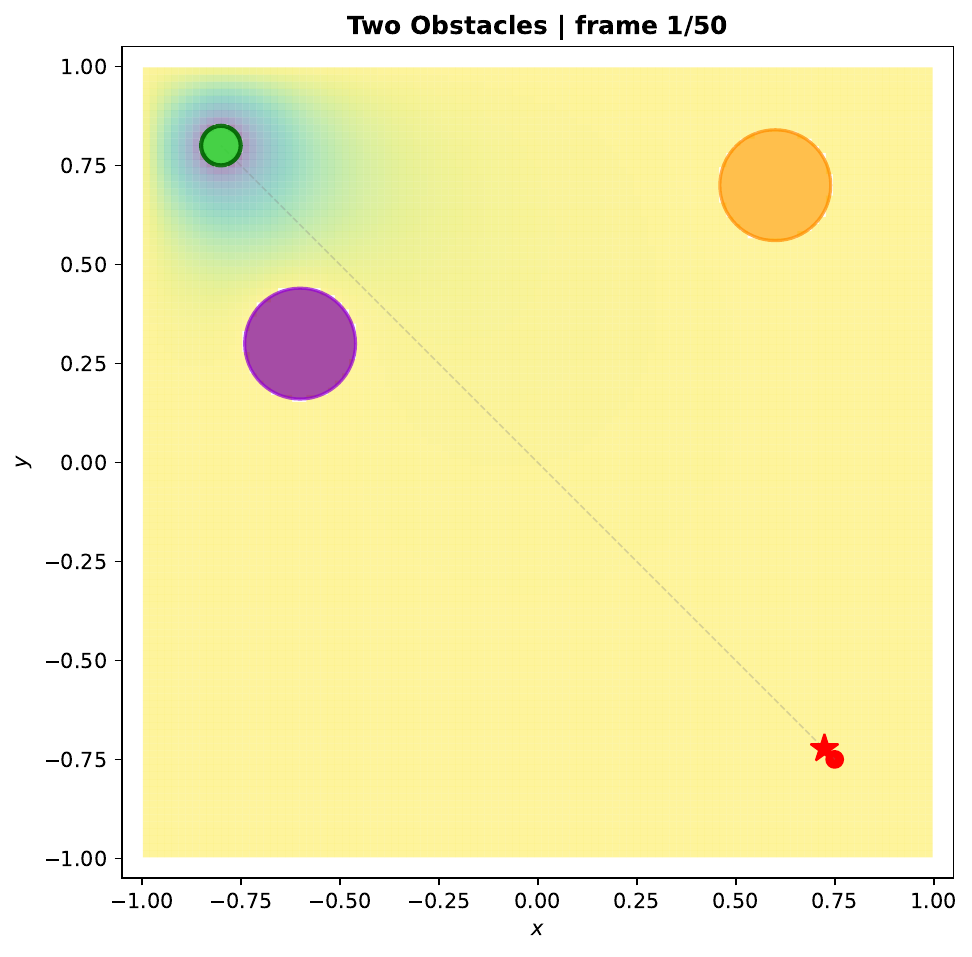}}
\subcaptionbox{Frame 21}
  {\includegraphics[width=0.31\linewidth]{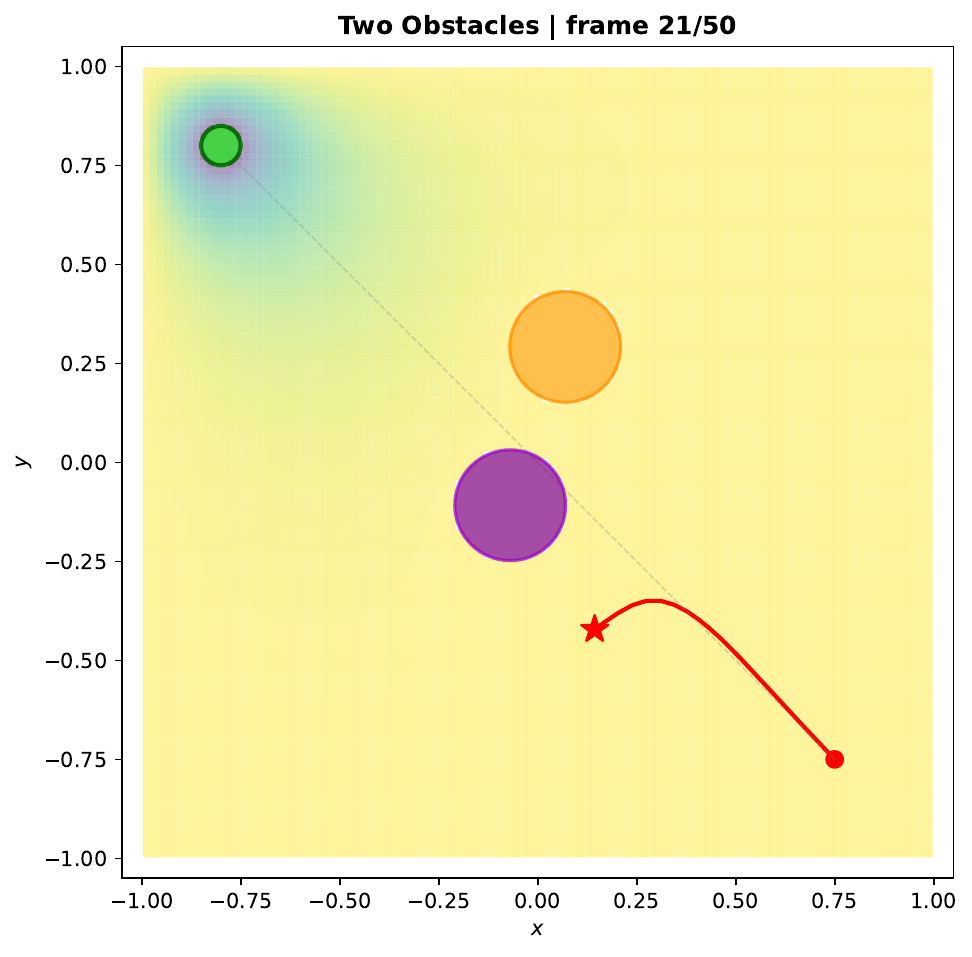}}
\subcaptionbox{Frame 41}
  {\includegraphics[width=0.31\linewidth]{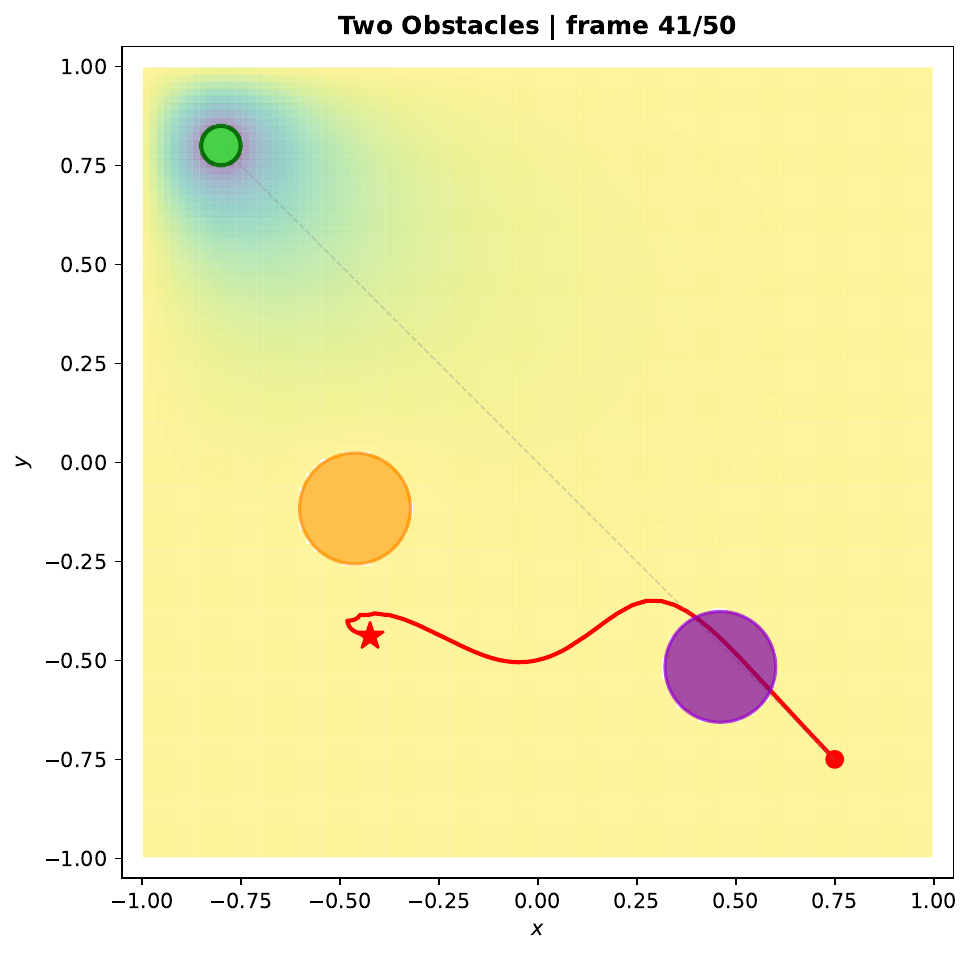}}
\caption{\edit{Algorithm~\ref{alg:block-update} with normalized NAG:
two-obstacle crossing experiment at frames 1, 21, and 41 of the
50-frame schedule.}}
\label{fig:two-moving-obstacles-keyframes}
\end{figure}

\FloatBarrier

\subsection{Deforming geometry and topology change}
\label{sec:topology-change}

The static/dynamic decomposition of \Cref{sec:method-dynamic} does not assume
rigid motion; it only requires that the boundary is defined by a level set.
\Cref{fig:topology-change-keyframes} tests this by having a single obstacle undergo a continuous
shape deformation that includes a \emph{topology change}: two overlapping
circles (one connected component) drift apart until they separate into two
disconnected obstacles, then merge back. The grid, LGF kernel, and sine-transform solver
are unchanged throughout.

\edit{The displayed trajectory frames were regenerated with
Algorithm~\ref{alg:block-update} on a 36-frame radius-$0.15$ schedule.
Endpoint-excluded separation ramps cross from merged to split without sampling
the singular exact-tangency configuration. The static block is factored once
and the maximum block residual is $6.11\times10^{-15}$. With $\eta=0.0040$,
normalized NAG reaches the goal in 624 steps with path length $2.50$. The separate 13-frame
radius-$0.22$ comparison in \Cref{sec:schur-comparison} uses
Algorithms~\ref{alg:full-trace} and \ref{alg:block-update}:
the dynamic count varies from 68 to 136 of 524--592 unknowns, and the
Algorithm~\ref{alg:block-update} field differs from
Algorithm~\ref{alg:full-trace} by at most
$1.03\times10^{-9}$.}

\begin{figure}[htbp]
\centering
\subcaptionbox{Frame 4 (merged)}
  {\includegraphics[width=0.31\linewidth]{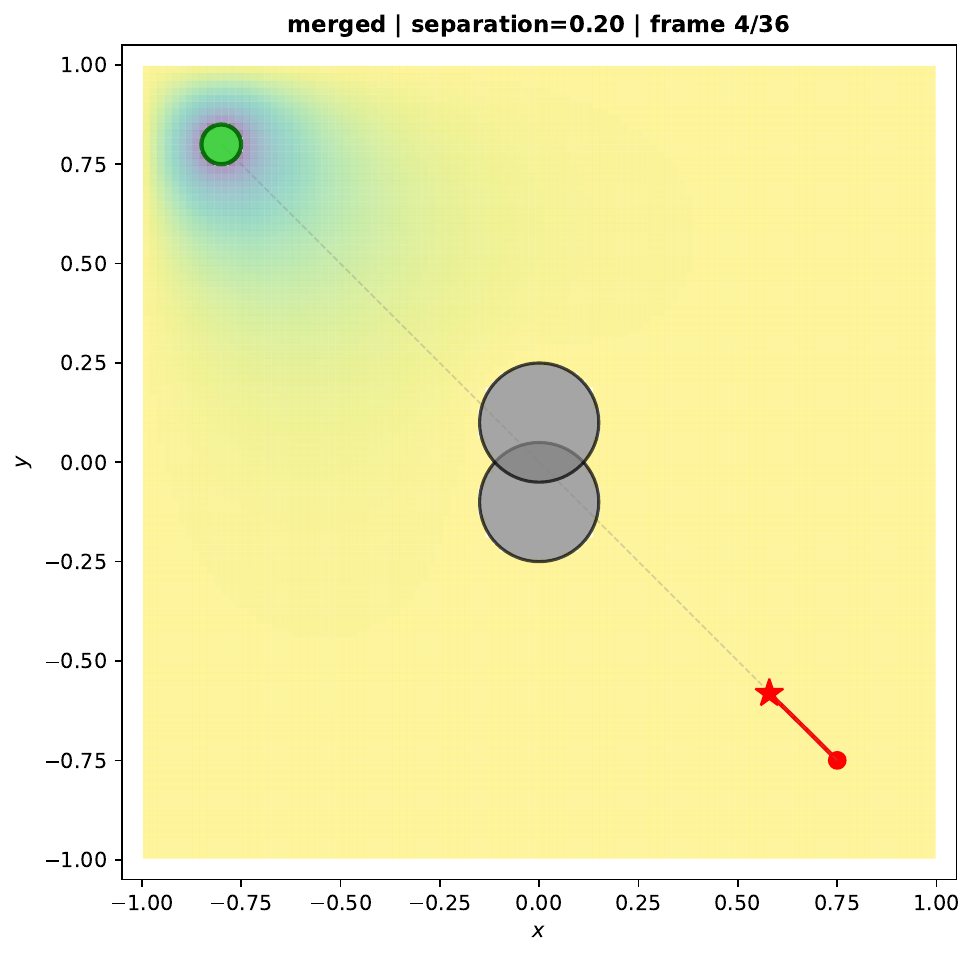}}
\subcaptionbox{Frame 19 (split)}
  {\includegraphics[width=0.31\linewidth]{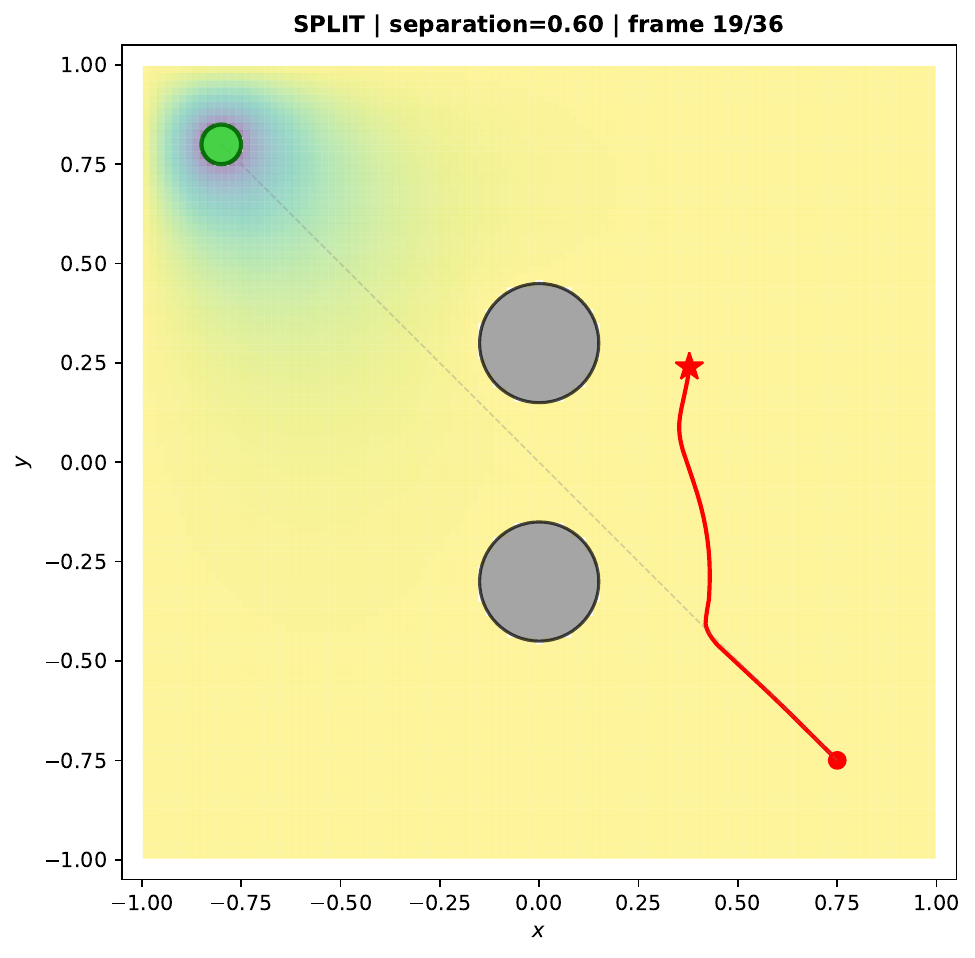}}
\subcaptionbox{Frame 25 (merging)}
  {\includegraphics[width=0.31\linewidth]{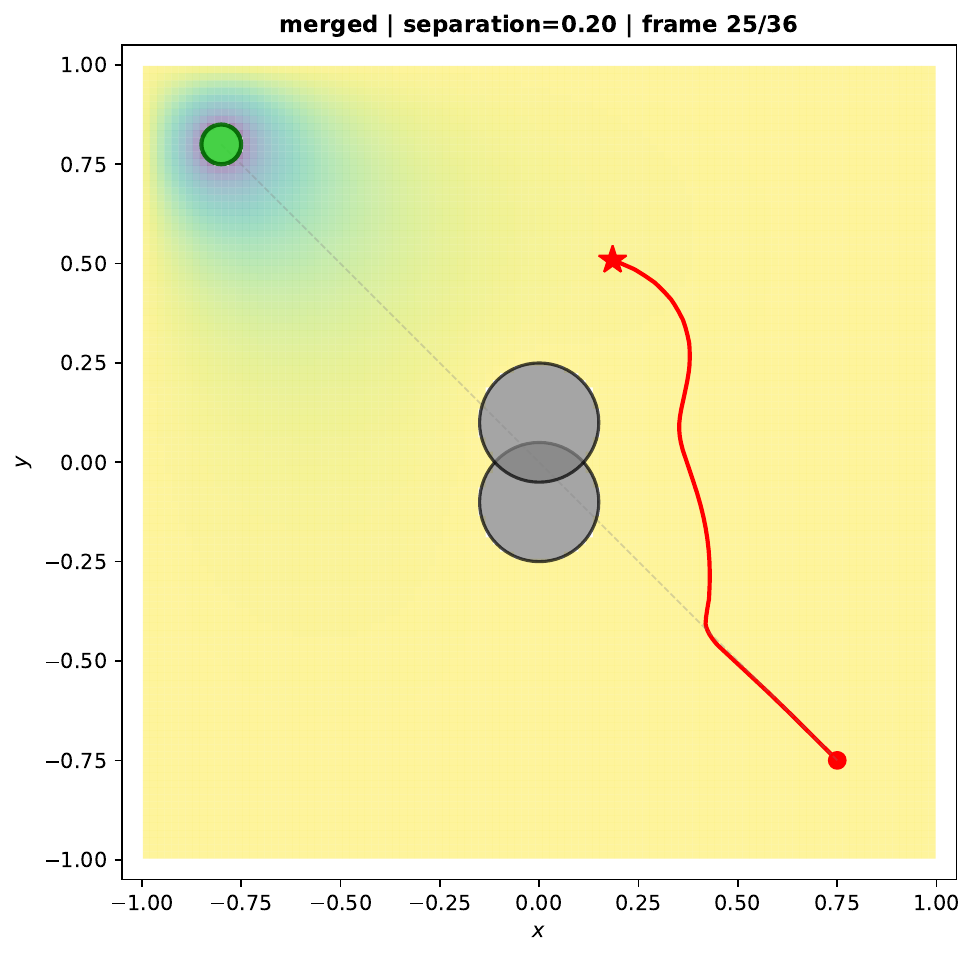}}
\caption{\rev{Algorithm~\ref{alg:block-update} with normalized NAG:
topology-change experiment: merged $\to$ split
$\to$ merging.}}
\label{fig:topology-change-keyframes}
\end{figure}

\FloatBarrier

\subsection{Appearing obstacle}
\label{sec:appearing-obstacle}

A more dramatic test of the method's reactivity is an obstacle that
\emph{appears suddenly} in the agent's path, with no prior warning.
\Cref{fig:appearing-obstacle-keyframes} shows the scenario: the agent follows the free-space harmonic
field for 12 frames, at which point a circular obstacle ($r=0.18$)
materializes at the origin. The field is re-solved once with the new level
\edit{set by Algorithm~\ref{alg:block-update}; no remeshing is needed. The
same $B_{\mathrm{ss}}$ factorization is reused before and after appearance, and
the two solves have a maximum block residual of $5.77\times10^{-15}$.} The sampled trajectory
transitions from the free-space path (dashed green) to the replanned
path (solid red), turns around the newly appeared obstacle, and reaches the
goal under normalized NAG with $\eta=0.0040$ in 680 steps (path length 2.72
vs.\ 2.14 direct).

\begin{figure}[htbp]
\centering
\subcaptionbox{Frame 1 (free space)}
  {\includegraphics[width=0.31\linewidth]{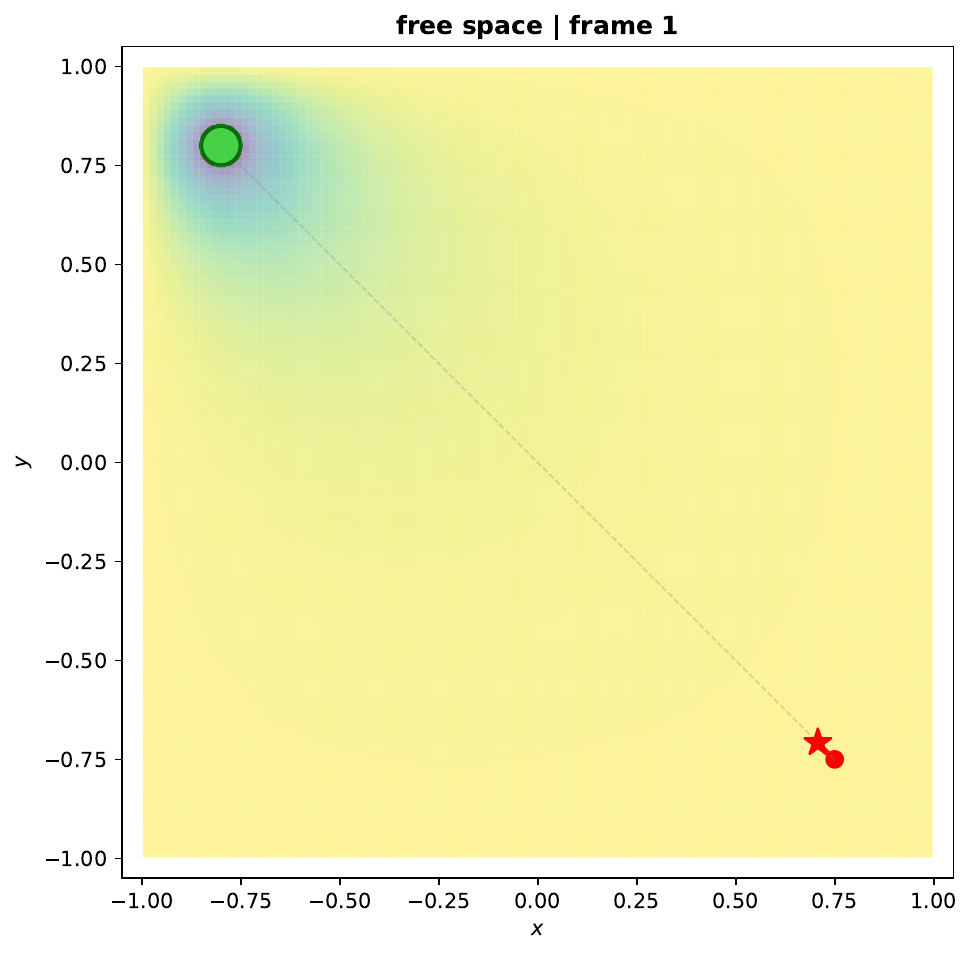}}
\subcaptionbox{Frame 12 (obstacle appears)}
  {\includegraphics[width=0.31\linewidth]{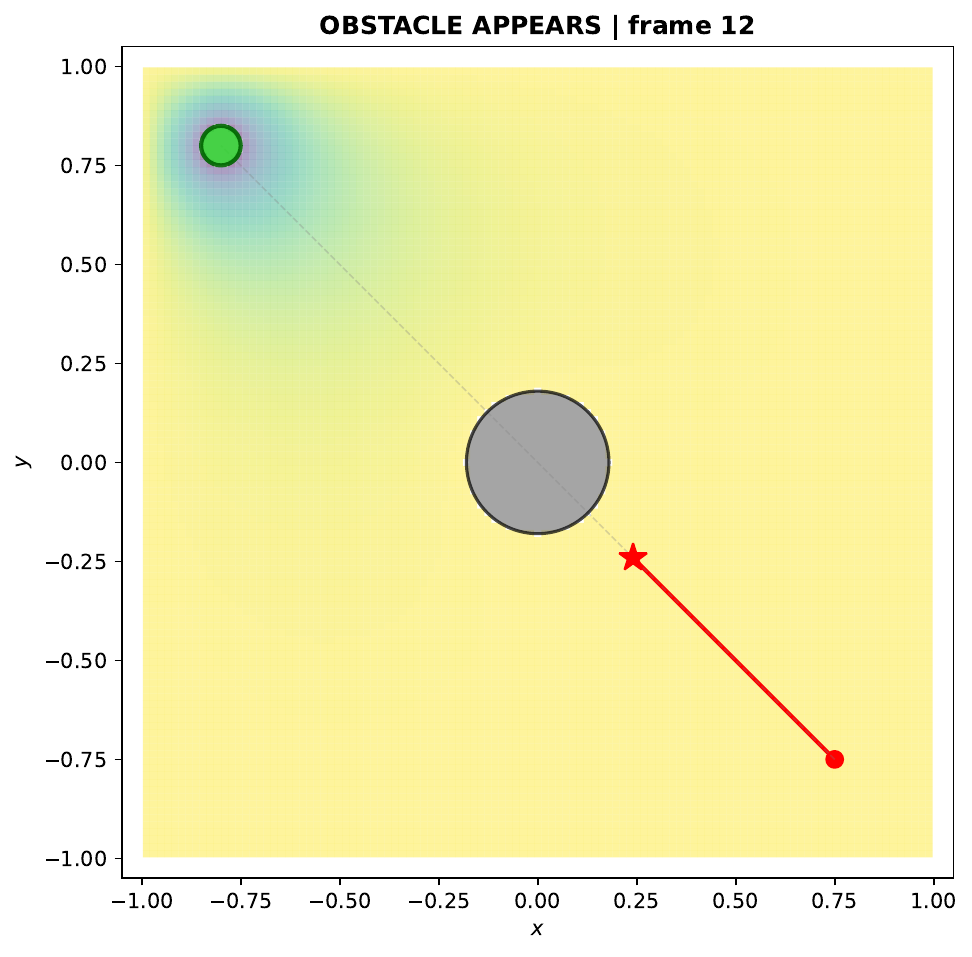}}
\subcaptionbox{Frame 46 (arrival)}
  {\includegraphics[width=0.31\linewidth]{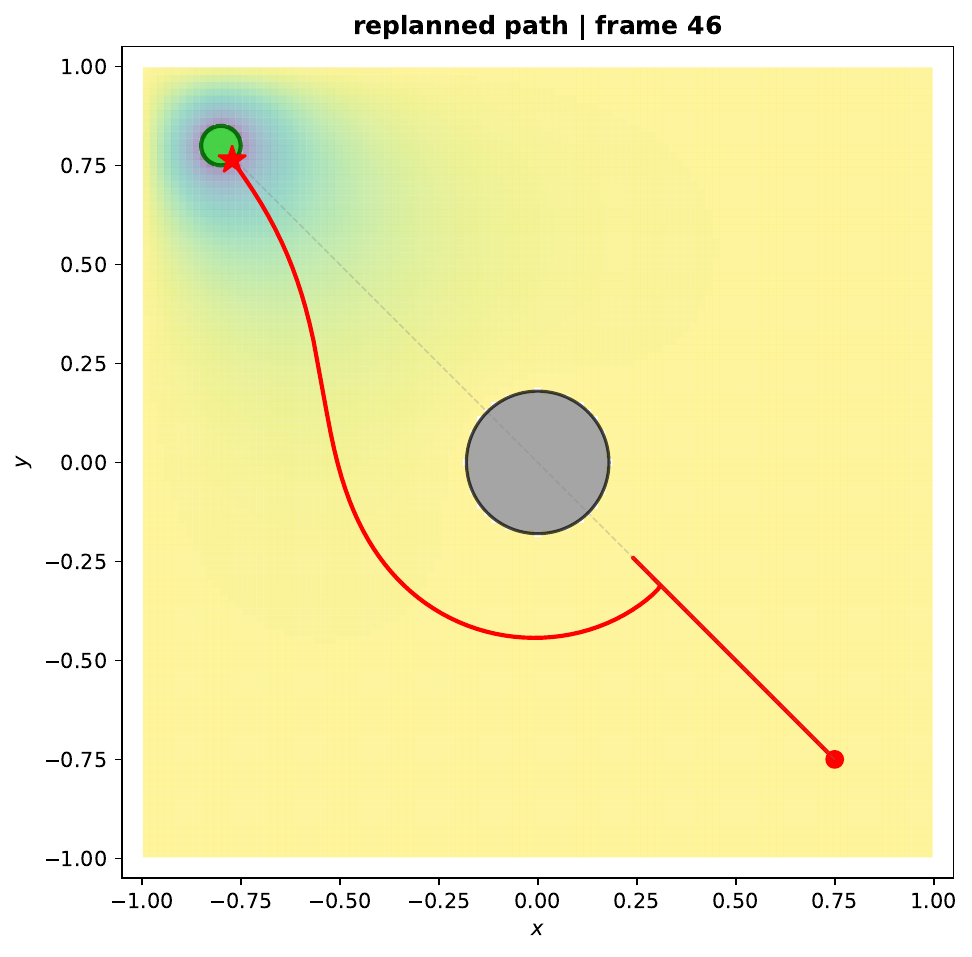}}
\caption{\rev{Algorithm~\ref{alg:block-update} with normalized NAG:
appearing-obstacle experiment. The obstacle
appears at frame 12, and the agent replans without remeshing.}}
\label{fig:appearing-obstacle-keyframes}
\end{figure}

\FloatBarrier
\section{Discussion and extensions}
\label{sec:discussion}

\subsection{Relation to fast marching and sampling-based planners}
\label{sec:comparison}
Fast marching and Eikonal solvers~\cite{sethian1996fast,kimmel1998computing} are the natural
competitors for single-query
workspace path planning. They are usually preferable when the desired object is
a shortest-distance field and the main requirement is minimal solve cost. The
harmonic field used here solves a different problem: it produces a smooth
potential governed by an elliptic boundary value problem, with obstacle
repulsion encoded directly through Dirichlet data. This is not a claim that a
Laplace solve beats fast marching for a one-shot shortest-path query;
fast-marching planners~\cite{garrido2006path} are natural PDE-based alternatives
for that setting. The case for the present method is strongest when the geometry
is implicit, when many queries reuse the same computed field, and when the
elliptic potential and its gradient are the desired outputs rather than a
distance-optimal path.

Sampling-based planners~\cite{lavalle2006planning,karaman2011sampling,gammell2015batch} address a
different regime. They remain the appropriate
choice in high-dimensional configuration spaces or when kinodynamic constraints
are central. Among other field-based constructions, analytic navigation
functions~\rev{\cite{rimon1990exact}} avoid a PDE solve at the cost of star-shaped geometry
assumptions, and continuum / dynamic-potential approaches have been used for
large-scale crowd simulation~\cite{treuille2006continuum}; the present method instead
solves the elliptic problem directly on arbitrary implicit geometry.
\edit{For moving geometry, the present work implements density-block reuse
and demonstrates a 2.4--5.5\% steady-frame reduction relative to the full trace
baseline in the translating-circle tests and a 2.5\% reduction through a
merge--split--merge topology change with up to 23\% dynamic unknowns. The
modest end-to-end gain reflects the remaining global geometry update, not
repeated factorization of the static block.}

\subsection{Where the method is the right tool}
\label{sec:where-right}
The method targets planar elliptic problems with moving or only partially known
implicit geometry, where repeated solves on a fixed grid replace remeshing. It
is additionally attractive when many queries share one static geometry and an
elliptic field or gradient is required. \edit{The present experiments prescribe
the geometry; they do not evolve a free boundary from the computed PDE.}

The principal structural benefit is that the Cartesian grid, Green's function
table, and sine-transform reconstruction operator remain fixed as the geometry moves.
Body-fitted alternatives may avoid full remeshing through mesh deformation for
moderate motion, but topology changes and appearing obstacles remain difficult.
\edit{The implementation boundary and its timing consequence are documented in
\Cref{sec:method-dynamic,sec:profiling}; the swept-band estimate in
\Cref{sec:analysis-resolve} concerns geometry processing, whereas the dense block
count concerns algebraic assembly.}

\subsection{Limitations}
\label{sec:limitations}
\begin{itemize}
\item \textbf{Narrow passages.} Harmonic gradients can become weak in long
regions feeding narrow channels, and any passage represented by only a few grid
cells is sensitive to both field resolution and path-integration parameters.
\item \textbf{Solve cost.} \edit{In the cached block-update implementation,
full-grid geometry classification, intersection finding, and collocation setup
consume 93--95\% of the per-frame time. The block refresh itself takes only
\SI{1.30}{\milli\second} at $\ell=7$ and
\SI{4.68}{\milli\second} at $\ell=8$, while the cached static block is not
refactored. The reported timings reflect a pure-Python (NumPy/SciPy) prototype;
a swept-band or compiled geometry update is the immediate optimization target.
Larger three-dimensional boundary sets would additionally require compressed
kernel interactions.}
\item \edit{\textbf{Density-system conditioning.} The block update is
accurate through $\ell=8$, but $\kappa_2(B)$ and
$\kappa_2(B_{\mathrm{ss}})$ reach $2.14\times10^4$ and
$1.95\times10^4$, respectively. Substantially finer grids will require a
well-conditioned block formulation or iterative preconditioning; the trace
matrix remains the better-conditioned full-solve baseline.}
\item \edit{\textbf{Discrete trajectory guarantee.} The nodal maximum principle
does not extend automatically to the off-grid interpolated gradient or the
finite-step path integrator. The dynamic scripts implement no segment-level
collision detection or step rejection.}
\item \textbf{Dimensionality.}   The present work is restricted to planar
domains. The extension to 3D is conceptually direct, but the surface solve
becomes the bottleneck; the well-conditioned trace form and kernel compression
discussed in \Cref{sec:factor-compare} are then essential.
\item   \textbf{Boundary conditions.} The current formulation uses
Dirichlet data exclusively, as this suffices for harmonic navigation.
Neumann or Robin data can in principle be incorporated by replacing the
Dirichlet collocation closure with normal-derivative or mixed boundary
functionals, while preserving the fixed-grid and static/dynamic organization.
The resulting solvability and conditioning, however, require separate analysis.
\end{itemize}


\section{Conclusion}
\label{sec:conclusion}

We have developed an unfitted lattice Green's function/difference-potentials
method for repeated Laplace solves on prescribed moving implicit planar domains. The method
uses a fixed Cartesian grid, enforces Dirichlet data at true boundary
intersections through local quadratic basis functions, and reconstructs the interior field
by a sine-transform difference-potential solve. Geometry-independent data are reused
as the boundary moves.

\edit{The directly assembled density matrix admits a
static--dynamic block partition, and the implementation caches and factors its
invariant $B_{\mathrm{ss}}$ once. Subsequent frames assemble only
$B_{\mathrm{sd}}$, $B_{\mathrm{ds}}$, and $B_{\mathrm{dd}}$, solve the reduced
system, and reconstruct the layer trace. On $127^2$ and $255^2$ grids the
result agrees with Algorithm~\ref{alg:full-trace} within $1.1\times10^{-9}$ and reduces
steady-frame time by 2.4--5.5\% relative to that reference. A merge--split--merge test
with 13--23\% dynamic unknowns retains a single static factorization, agrees
within $1.03\times10^{-9}$, and reduces time by 2.5\% relative to
Algorithm~\ref{alg:full-trace}. The full-grid geometry update remains the dominant cost.}

\edit{Manufactured solutions show near-second-order convergence for both the
potential and reconstructed gradient over the full interior and in the first
two interior grid layers.} Tests on a cross-shaped obstacle show that
the method remains stable in the presence of re-entrant corners, although the
available refinements are insufficient to verify the theoretical corner
singularity rate. Dynamic harmonic navigation demonstrates the fixed-grid solver
for translating, deforming, appearing, and topology-changing obstacles.

\edit{The most immediate next step is a swept-band geometry update that avoids
reclassifying the full grid at every frame, together with a sparse full-domain
baseline. Extensions to three dimensions will additionally require compressed
kernel interactions. Although navigation provides the application here, the same
organization applies to other repeated Laplace solves on evolving implicit
boundaries. A particularly natural extension is a quasi-static free-boundary
problem, in which the harmonic field and interface motion are coupled and each
interface update triggers a new Laplace solve.}

\section*{Acknowledgements}

This work was supported in part by the National Natural Science Foundation of
China (Grant No.~12401546) and Wenzhou Kean University (Grant
No.~ISRG2024003).

\small
\setlength{\bibsep}{2pt plus 0.3ex}
\bibliographystyle{plainnat}
\bibliography{references}

\end{document}